\documentclass[11pt,a4paper]{article}
\usepackage{amsmath,amssymb,amsthm,datetime,enumerate,extarrows,fullpage,mathrsfs,mathtools,parskip,tikz}
\usepackage[all,cmtip]{xy}
\usetikzlibrary{arrows,decorations.pathreplacing}

\usepackage{graphics}

%%%%%%%%%%%%%%%%%%%%   PREAMBLE   %%%%%%%%%%%%%%%%%%%% (fold)

% Set equations and figures to number within sections.

\numberwithin{equation}{section}
\numberwithin{figure}{section}

% Setting up amsthm commands

\begingroup 
\makeatletter 
\@for\theoremstyle:=definition,remark,plain\do{
\expandafter\g@addto@macro\csname th@\theoremstyle\endcsname{
\addtolength\thm@preskip\parskip 
}
}
\makeatother
\endgroup

\newtheorem{theorem}{Theorem}[section]
\newtheorem{lemma}[theorem]{Lemma}

\newtheorem{corollary}[theorem]{Corollary}
\theoremstyle{remark}
\newtheorem{remark}[theorem]{Remark}
\theoremstyle{definition}

% New shortcut commands

\newcommand*{\defeq}{\mathrel{\vcenter{\baselineskip0.5ex \lineskiplimit0pt\hbox{\scriptsize.}\hbox{\scriptsize.}}}=}

\newcommand{\GL}{\ensuremath{\operatorname{GL}}}

\newcommand{\emb}{\ensuremath{\operatorname{Emb}}}
\newcommand{\diff}{\ensuremath{\operatorname{Diff}}}

\newcommand{\SO}{\operatorname{SO}}
\newcommand{\hocolim}{\operatorname{hocolim}}

\newcommand{\Fr}{\operatorname{Fr}}

\newcommand{\oD}{\text{int} (D)}
\newcommand{\oW} {\text {int} (W)}
\newcommand{\Iso}{\ensuremath{\operatorname{Iso}}}
\newcommand{\rel}{\ensuremath{\operatorname{rel}}}

\renewcommand{\mathbf}[1]{\ensuremath{\boldsymbol{#1}}}

% Fix delimeter spacing

%\let\originalleft\left
%\let\originalright\right
%\renewcommand{\left}{\mathopen{}\mathclose\bgroup\originalleft}
%\renewcommand{\right}{\aftergroup\egroup\originalright}

% Fiddly spacing for perfectionism.
%\newdimen{\andlen}
%\newdimen{\andskip}
%\settowidth{\andlen}{and}
%\setlength{\andskip}{0.5\textwidth}
%\addtolength{\andskip}{-0.5\andlen}

% (end)
%%%%%%%%%%%%%%%%%%%%    TITLE     %%%%%%%%%%%%%%%%%%%% (fold)

\title{Homology stability for symmetric diffeomorphism and mapping class groups}
\author{Ulrike Tillmann}
\date{\monthname\ \the\year}
% (end)
%%%%%%%%%%%%%%%%%%%%   DOCUMENT   %%%%%%%%%%%%%%%%%%%%

\begin{document}
	\maketitle

%	\tableofcontents

%%%%%%%%%%%%%%%%%%%%%%%%%%%%%%%%%%%%%%%%%%%%%%%%%%%%%%%%%%%%%%%%%%%%%%%%%%%%%%%
% % % % % % % % % % % % % % % % % % % % % % % % % % % % % % % % % % % % % % % %

\begin{abstract}
For any smooth compact manifold $W$ of dimension of at least two 
we prove that the classifying spaces of its group of  
diffeomorphisms which
fix a set of $k$ points or $k$ 
embedded disks (up to permutation) satisfy homology stability. 
The same is true for so-called symmetric diffeomorphisms of $W$ 
connected sum with $k$ copies of an arbitrary compact smooth manifold $Q$ of the same dimension.
The analogues for mapping class groups as well as other generalisations 
will also be proved. 
%thus generalising some results of Hatcher and Wahl \cite {HatcherWahl}. 
%The main technical ingredient for the proofs is the homology 
%stability theorem for configuration spaces of manifolds with twisted labels.
%The latter is a new theorem which extends untwisted version, see \cite {??} 
\end{abstract}

	\section{Introduction} % (fold)
	\label{sec:introduction}

%{\it The bigger picture:}
Compact smooth manifolds and their 
diffeomorphism groups are fundamental objects in geometry
and topology. If one wants to understand diffeomorphic
families of a given compact manifold $W$
one is led to study the associated topological moduli space
$$
\mathcal M (W) := \lim _{N \to \infty} \emb (W ; \mathbb R ^N)/ \diff (W), 
$$
the orbit space of the group of diffeomorphisms of $W$ acting on the 
space of smooth embeddings of $W$ into larger and larger Euclidean space.
This is also a
model for the classifying space of $\diff (W)$
and hence its cohomology is the set of characteristic classes for smooth
$W$-bundles. To understand the topology of $\mathcal M(W)\simeq B \diff (W)$ 
is a difficult problem and only  
a few special cases are fully understood. 

\vskip .1in
{\it Stabilisations:}

To simplify the problem one might consider to study an associated stabilised problem.   
One way to stabilise that has played an important role in the past is to  thicken
the manifold by taking the cross product with the unit interval $I = [0, 1]$ and to   consider the  pseudo-isotopies 
$P(W)$,  which are diffeomorphisms of $W \times I$ 
which fix $W \times \{ 0\}$ and $\partial W \times I$ point-wise. 
Repeating the process, the stable pseudo-isotopy group is then defined as the limit space 
$\mathcal P(W) := \lim _{k \to \infty} P( W\times I^k)$. It can be 
studied via $K$-theory. Indeed, in the late 1970s  
Waldhausen \cite {Waldhausen} proved that his $K$-theory 
$A(W)$ is a double deloop of $\mathcal P(W)$ times the well-understood
free infinite loop space on $W$.
%modulo the  well-known factor of
%the stable homotopy group of $W$. 
That is to say, he proved that
$$
A(W) \simeq Wh (W) \times \Omega ^\infty \Sigma ^\infty (W_+)
$$ 
where $Wh(W) $ is the smooth Whitehead space of $W$ and 
$\Omega^2 Wh (W) \simeq \mathcal P(W)$.
The crucial fact that allows one to deduce in principle 
something for $P(M\times I^k)$ from the $K$-theory $A(W)$ is 
Igusa's stability theorem \cite {Igusa}.  
It says that the maps $P (M\times I^k) \to P(M\times
I^{k+1})$ are $c$-connected for $\dim W + k \geq \text{max} ( 2c+7, 3c+4)$. 
Though Waldhausen $K$-theory is well studied, 
it remains however difficult to deduce concrete 
information for specific manifolds.

\vskip .1in
More recently, another stabilisation process has been considered and its study
has proved very fruitful. Rather than increasing the dimension of the 
manifold we increase its complexity in the following sense. 
Let $Q$ be a manifold of the same dimension as $W$
and consider the connected sum 
$W\sharp_k Q$ 
of $W$ with $k$ copies of $Q$. 
So far the most important example is when $W$ is the sphere 
and $Q$ is the two-dimensional torus in which case $W\sharp_k Q$ is a surface of
genus $k$. As $k$ goes to infinity,
  the cohomology of the classifying space of its
diffeomorphism group is understood by the   Madsen-Weiss theorem 
\cite {MW}, see also \cite {GMTW}.  
In this case it is Harer's homology stability theorem for  the mapping 
class group of surfaces that allows us to 
deduce information for a compact surface.  
These theorems have recently been generalised  by Galatius and Randal-Williams
\cite {GRW1}, \cite {GRW2} to manifolds $W$ of even dimensions
$2d$ for $d>2$ and  with $Q = S^d \times S^d$, and by Perlmutter \cite{Perlmutter} in cases which include also some odd-dimension manifolds.

\vskip .1in
{\it Homology stability:}

Motivated by the second approach to stabilisation, we look at the question of homology 
stability for diffeomorphisms groups of manifolds more generally. 
Until the recent paper \cite {GRW2}, 
homology stability for (classifying spaces of) 
diffeomorphism groups was only known 
in the case of surfaces, and here only through the 
homology stability of the associated discrete groups, 
the  mapping class groups --
the mapping class groups  are homotopy equivalent to 
the diffeomorphism groups for surfaces  of negative Euler characteristic.
%This homology stability result was crucial in the proof 
%of the Mumford conjecture \cite M; see also \cite GMTW.
%Motivated by the prospect to generalise these result to higher 
%dimensional manifolds, Galatius and Randal-Williams \cite have recently proved 
%homology stability also for 
%moduli spaces of certain higher dimensional manifolds with respect to .
The purpose of this paper is to show that homology stability holds quite broadly 
for certain diffeomorphisms groups 
involving arbitrary  manifolds $W$ and $Q$ of both odd and even dimensions. 
The point of view taken is that the homology stability of the symmetric groups, 
and more generally of configuration spaces, is fundamental. The stability theorems 
we prove here are
derived from this basic example.  
%by building on the well-known homology stability of configuration spaces, 
%see \cite {Segal}, \cite {McDuff}, and   \cite {RW} for a more recent account.

\vskip .1in
{\it Main results:}

Throughout this paper,
let $W$ be a smooth, compact, path-connected manifold of dimension $ d\geq 2$ 
with a (parametrised) closed $(d-1)$-dimensional disk 
$\partial _0$ in its boundary $ \partial W$. Let 
$$ 
\diff (W) := \diff (W; \rel \partial_0)
$$
denote  its group of smooth diffeomorphisms that fix $\partial_0$ point-wise. 
More precisely, we will assume that any diffeomorphism $\psi$ fixes a 
collar of a  small neighbourhood of $\partial _0$ in $\partial W$ and furthermore
that restricted to a fixed collar of $\partial W$  
it is of the form $1 \times \psi |_{\partial W} $.
{\bf Note}, as they fix 
$\partial _0$ point-wise, 
the diffeomorphisms will automatically  be orientation preserving if $W$ is
oriented!

Let 
$W^k$ denote the manifold $W$ with $k$ distinct marked points $w_1, \dots, w_k$ in its interior.
Consider the subgroup  
$\diff^k (W)$ of $\diff (W)$ consisting of those diffeomorphisms that permute the marked points.

\begin{theorem}
There are maps $H_* (B\diff^k (W) )
\to H_* (B\diff ^{k+1} (W))$ which are split injections 
for all degrees $*$ and  isomorphisms in degrees $* \leq k/2$.
\end{theorem}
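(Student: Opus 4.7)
My plan is to realize $B\diff^k(W)$ as the Borel construction on an unordered configuration space and transport the known homology stability of configuration spaces to the diffeomorphism classifying spaces via a Serre spectral sequence comparison. Since $W$ is path-connected of dimension $\geq 2$, $\diff(W)$ acts transitively on $C_k(\oW)$ with stabilizer $\diff^k(W)$, so $B\diff^k(W)\simeq C_k(\oW)_{h\diff(W)}$ and one obtains the fibration
\[
C_k(\oW)\longrightarrow B\diff^k(W)\longrightarrow B\diff(W).
\]

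Because every element of $\diff(W)$ fixes a collar of $\partial_0$ pointwise by hypothesis, I may choose the $(k{+}1)$-st marked point $w_{k+1}$ inside that collar; the inclusion $C_k(\oW)\hookrightarrow C_{k+1}(\oW)$ is then $\diff(W)$-equivariant and assembles into a map of fibrations over $B\diff(W)$ whose map on total spaces is the stabilisation $B\diff^k(W)\to B\diff^{k+1}(W)$. The fibrewise input is McDuff's classical theorem on configuration spaces: for a connected non-compact manifold of dimension $\geq 2$, the inclusion $C_k\hookrightarrow C_{k+1}$ is a split injection on integral homology in every degree and an isomorphism in degrees $\ast\leq k/2$.

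I would then compare the Serre spectral sequences
\[
E^2_{p,q}=H_p(B\diff(W);H_q(C_k(\oW)))\Longrightarrow H_{p+q}(B\diff^k(W))
\]
and the analogous one for $k+1$. Equivariance of the fibre stabilisation makes it a morphism of local coefficient systems on $B\diff(W)$, so Zeeman's comparison theorem (in its local coefficient version) transports the fibrewise isomorphism in degrees $q\leq k/2$ to an isomorphism on the abutment in total degrees $\ast\leq k/2$. To obtain the split injection in all degrees, I would propagate the fibrewise splittings page by page, obtaining splittings at $E^\infty$ and hence on the associated graded of the abutment, from which a global splitting of $H_\ast(B\diff^k(W))\hookrightarrow H_\ast(B\diff^{k+1}(W))$ is reassembled using that the Serre filtration is bounded in each total degree.

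The step I expect to require the most care is the treatment of local coefficients: the $\pi_0\diff(W)$-action on $H_\ast(C_k(\oW))$ is generally non-trivial, and one must verify that the splitting provided by the configuration space theorem is sufficiently natural with respect to this action to descend to a splitting of the coefficient systems entering $E^2$. McDuff's scanning/insertion construction of the splitting is geometric in nature and should be compatible with the mapping class group action, so I expect this to work, but this equivariance check is the technical crux; once it is in hand, both halves of the statement drop out of the spectral sequence comparison.
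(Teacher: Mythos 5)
The first half of your proposal coincides with the paper's own proof of Theorem 3.1: the evaluation fibration $C_k(W)\to B\diff^k(W)\to B\diff(W)$ realised as a Borel construction on a contractible embedding space, the $\diff(W)$-equivariant stabilisation through a collar at $\partial_0$, and the Serre spectral sequence comparison (Theorem 2.2 plus Lemma 2.7) giving the isomorphism in degrees $*\leq k/2$. That part is correct. The gap is in the other half, the split injectivity in all degrees, and it sits exactly at the point you flag as the ``technical crux'' and then wave through.

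Two things go wrong with your plan. First, the equivariance you need is not a naturality check on McDuff's or B\"odigheimer's construction: what the paper actually uses (Theorem 2.1) is that the stable splitting of $C_k(W)\to C_{k+1}(W)$ can be made equivariant for the \emph{topological group} $\diff(W)$ at the level of spectra, and this is the main theorem of the separate paper \cite{MT} (equivariant scanning); equivariance of the induced retraction on $H_*(C_k(W))$ under the mapping class group, which is all your local-coefficient formulation asks for, is strictly weaker. Second, even granted a splitting of the coefficient systems at $E^2$, your ``propagate page by page and reassemble'' step does not work: the retraction $H_*(C_{k+1}(W))\to H_*(C_k(W))$ is not induced by a map of fibrations over $B\diff(W)$, so there is no reason it commutes with the higher differentials; and split injectivity on the associated graded of a filtered map does not imply split injectivity of the map itself, because of extension problems --- e.g.\ $f:\mathbb{Z}/2\to\mathbb{Z}/4$, $f(1)=2$, with the filtration $0\subset 2\mathbb{Z}/4\subset\mathbb{Z}/4$ on the target, is split injective on associated graded pieces but admits no retraction. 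The paper sidesteps both issues simultaneously: by Lemma 2.8 the $\diff(W)$-equivariant stable splitting of the fibres induces a stable splitting of the Borel constructions themselves, i.e.\ the map $B\diff^k(W)\to B\diff^{k+1}(W)$ is stably split injective as a map of spaces (a statement stronger than the one you are proving), and the homology splitting in all degrees follows directly, with no appeal to the Serre filtration. To repair your argument you would either have to import Theorem 2.1 of the paper (the equivariant splitting of \cite{MT}) and then argue on Borel constructions as in Lemma 2.8, or find a genuinely different construction of a splitting compatible with the filtration, which your current outline does not provide.
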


Now fix 
$k$ closed disjoint disks in $W \setminus \partial W$ and 
parametrisations $\phi_1, \dots , \phi _k : D^k \to \partial W \setminus \partial W$ which are compatible with the orientation of $W$ if it is oriented.
Then consider the group $\diff_k (W)$ of  diffeomorphisms $\psi \in \diff (W)$ which
commute with these parametrisations in the 
sense that 
for some permutation $\sigma \in \Sigma _k$ depending on $\psi$ we have  
$$
\psi \circ \phi_i = \phi_{\sigma (i)} \quad \text{ for all } i = 1, \dots , k. 
$$

\begin{theorem}
There are maps $H_* (B\diff_k (W) )
\to H_* (B\diff _{k+1}(W))$ which are split injections for all degrees $*$ 
and  isomorphisms in degrees $* \leq k/2$.
\end{theorem}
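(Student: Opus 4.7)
The plan is to imitate the proof of Theorem 1.1, replacing configurations of $k$ marked points by configurations of $k$ parametrised disks. In parallel with the point-case fibration $C_k(W) \to B\diff^k(W) \to B\diff(W)$, I would realise $B\diff_k(W)$ as a Borel construction via the natural $\diff(W)$-action on unordered $k$-tuples of pairwise disjoint parametrised disks in $\oW$, for which $\diff_k(W)$ is by definition the stabiliser of $\{\phi_1,\dots,\phi_k\}$. Provided $W$ is connected and of dimension at least two, isotopy extension makes this action transitive on the relevant component (orientation-preserving parametrisations if $W$ is oriented), yielding a homotopy fibration
$$\emb\bigl(\textstyle\coprod_k D^d,\,\oW\bigr)/\Sigma_k \;\longrightarrow\; B\diff_k(W) \;\longrightarrow\; B\diff(W).$$

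A standard tubular-neighbourhood argument (take the $1$-jet at the origin) shows that $\emb(D^d, W)$ deformation retracts onto the frame bundle $\Fr(W)$, so the fibre above is homotopy equivalent to the unordered framed configuration space $C^{\mathrm{fr}}_k(W) := (\Fr(W)^k \setminus \Delta)/\Sigma_k$. The stabilisation $B\diff_k(W) \to B\diff_{k+1}(W)$ covers the identity on $B\diff(W)$, and on fibres becomes the stabilisation $C^{\mathrm{fr}}_k(W) \to C^{\mathrm{fr}}_{k+1}(W)$ obtained by adjoining a framed point in a collar of $\partial_0$. Homology stability for these labelled configuration spaces — with labels in the fixed Lie group $\GL_d$ (or $\SO(d)$) — is a classical theorem of McDuff--Segal, extended to the labelled setting by Randal-Williams; since the label space is fixed and acts only on the framing factors, it causes no additional difficulty beyond the unlabelled case used in Theorem 1.1. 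A Serre spectral sequence comparison over $B\diff(W)$ then transfers this stability to $B\diff_k(W)$. Split injectivity in all degrees follows from the same transfer argument along $\Sigma_k \hookrightarrow \Sigma_{k+1}$ that gives the corresponding splitting in Theorem 1.1.

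The main obstacle is making the Serre spectral sequence comparison rigorous in the presence of the non-trivial $\pi_1 B\diff(W)$-action on the fibre homology $H_*(C^{\mathrm{fr}}_k(W))$: both fibre and base change with $k$, and one cannot apply Zeeman comparison to the fibration $C^{\mathrm{fr}}_k(W) \to B\diff_k(W) \to B\diff(W)$ naively. The resolution is to verify that the coefficient system $\{H_*(C^{\mathrm{fr}}_k(W))\}_k$ is \emph{polynomial} of degree bounded in terms of the homological degree, in the sense of Randal-Williams--Wahl, so that stability propagates through the spectral sequence. Once this polynomiality is checked — and the point of the paper is that framing labels reduce this to the already-understood polynomiality of $\{H_*(C_k(W))\}_k$ — the rest of the argument parallels Theorem 1.1 formally.
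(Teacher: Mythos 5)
Your overall architecture is the same as the paper's: realise $B\diff_k(W)$ as a fibration over $B\diff(W)$ whose fibre is the unordered configuration space of framed points (the paper does this via the derivative-recording evaluation map $E_k^T:\diff(W)\to C_k(W;\Fr)$, whose fibre is identified with $\diff_k(W)$ up to homotopy by the tubular-neighbourhood/jet argument you sketch), and then compare Serre spectral sequences. However, there are genuine gaps in the way you feed in the configuration-space input. First, your fibre $C^{\mathrm{fr}}_k(W)=(\Fr(W)^k\setminus\Delta)/\Sigma_k$ is \emph{not} a configuration space with labels in a fixed space $\GL_d$ unless $W$ is parallelisable: the labels live in the frame bundle $\Fr\to W$, which is in general non-trivial, so this is a configuration space with \emph{twisted} labels (labels in a fibre bundle). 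The classical fixed-label stability you cite does not literally apply; one needs the bundle-label generalisation (the paper's Theorem~2.2, with proofs in the cited work of Cantero--Palmer and Kupers--Miller), and moreover one needs the stabilisation map to be $\diff(W)$-equivariant so that it induces a map of local coefficient systems over $B\diff(W)$. Your parenthetical ``it causes no additional difficulty beyond the unlabelled case'' is exactly the point that has to be addressed, not waved away.

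Second, your proposed repair of the spectral sequence comparison is both unnecessary and not carried out. The non-trivial $\pi_1(B\diff(W))$-action on $H_*(C^{\mathrm{fr}}_k(W))$ is not an obstacle: the stabilisation covers a homotopy equivalence of the base $B\diff(W)$, and since the fibrewise stabilisation is $\diff(W)$-equivariant it gives a map of local coefficient systems which is an isomorphism in degrees $\leq k/2$; an elementary first-quadrant comparison (the paper's Lemma~2.7, which needs no simplicity hypothesis) then gives the isomorphism on abutments in the same range. Invoking Randal-Williams--Wahl polynomiality is the wrong framework here (their coefficient systems are indexed over the symmetric/braid categories, not over $\pi_1 B\diff(W)$), and in any case you leave the polynomiality ``to be checked,'' so your argument is incomplete at its crucial step. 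Finally, the integral splitting does not follow from ``the same transfer along $\Sigma_k\hookrightarrow\Sigma_{k+1}$'': a transfer of that kind does not obviously split the map integrally, and it is also not how Theorem~1.1 is split in the paper. What is actually used is the $\diff(W;\pi)$-equivariant stable splitting of the configuration-space stabilisation (Theorem~2.1, from [MT]) together with the fact that such equivariant stable splittings persist through the Borel construction (Lemma~2.8), applied to $B\diff_k(W)\simeq E\diff(W)\times_{\diff(W)}C_k(W;\Fr)$.
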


Denote by $W_k$ the manifold $W$ with the interior of the $k$ parametrised disks removed.
Let $Q$ be another smooth, connected manifold of dimension $d$. 
Remove an open disk and 
glue $k$ copies of it to $W _k$ to form the connected sum $W\sharp_k Q$.
We will be interested in those diffeomorphisms that map $W_k$  (and hence the disjoint union of the $k$ 
copies of $Q_1$) onto themselves. More precisely, let $G \subset O(d) 
\subset \diff (S^{d-1})$ be a closed 
subgroup. When $W$ is orientable we assume that  $G \subset SO(d)$.
% such that any element in $G$ can be extended 
%to a diffeomorphism of $Q_1$ and
%any element of $G^k$ can be extended to a diffeomorphism of $W_k$. 
Let $\Sigma_G \diff (W\sharp_k Q)$ denote the diffeomorphisms of $W\sharp_k Q$
that map  $W _ k$ onto itself and restrict to an element of the wreath product 
$\Sigma _k \wr G$ on the $k$ 
boundary spheres. 
%Apart from the implied restriction of the nature of the maps on the boundary on the $k$ copies of $Q_1$ any diffeomorphism may occur.   

\begin{theorem}
There are maps $H_* (B\Sigma_G \diff (W \sharp _k Q) )
\to H_* (B\Sigma_G \diff (W\sharp _{k+1} Q))$ which are  split injections 
for all degrees $*$ and isomorphisms in degrees $* \leq k/2$.
\end{theorem}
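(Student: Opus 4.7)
I would reduce Theorem 1.3 to Theorem 1.2 via a Serre spectral sequence for a natural fibration. Write $Q_0 := Q \setminus \operatorname{int}(D^d)$, so $W\sharp_k Q = W_k \cup \bigsqcup_k Q_0$ glued along the $k$ boundary spheres. Restricting a diffeomorphism of $W\sharp_k Q$ to $W_k$ yields a short exact sequence
\[
1 \to \prod_{i=1}^k \diff_\partial(Q_0) \to \Sigma_G\diff(W\sharp_k Q) \to \Sigma_G\diff(W_k) \to 1,
\]
where $\Sigma_G\diff(W_k)$ consists of diffeomorphisms of $W_k$ whose restriction to the $k$ boundary spheres lies in $\Sigma_k\wr G$; surjectivity uses the $O(d)$-action on the disk removed from $Q$ to extend any element of $\Sigma_k \wr G$. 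A second short exact sequence, compared with the sequence $1\to\diff_\partial(W_k)\to\diff_k(W)\to\Sigma_k\to 1$, identifies $B\Sigma_G\diff(W_k) \simeq B\diff_k(W)\times_{B\Sigma_k} B(\Sigma_k\wr G)$, giving a fibration $(BG)^k \to B\Sigma_G\diff(W_k)\to B\diff_k(W)$. Splicing the two produces the key fibration
\[
F^k \longrightarrow B\Sigma_G\diff(W\sharp_k Q) \longrightarrow B\diff_k(W), \qquad F := B\bigl(G\ltimes \diff_\partial(Q_0)\bigr),
\]
in which $\pi_1 B\diff_k(W)$ acts on the fiber via the surjection onto $\Sigma_k$, permuting the $k$ factors.

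The stabilization map (gluing a further copy of $Q$ near $\partial_0$) covers the stabilization of $B\diff_k(W)$ from Theorem 1.2, and on fibers it is the inclusion $F^k\hookrightarrow F^{k+1}$ at a basepoint in the new coordinate. I would then run the Serre spectral sequence
\[
E^2_{p,q} = H_p\bigl(B\diff_k(W);\, H_q(F^k)\bigr) \Longrightarrow H_{p+q}\bigl(B\Sigma_G\diff(W\sharp_k Q)\bigr),
\]
with $H_q(F^k)$ decomposed via K\"unneth into $\Sigma_k$-permutation representations. Stability of the $E^2$-page would follow by combining Theorem 1.2 with twisted-coefficient homology stability: the sequence $\{H_q(F^k)\}_k$, regarded as a coefficient system for $\{\diff_k(W)\}_k$ via the surjection to $\Sigma_k$, is of polynomial degree at most $q$, so Theorem 1.2 extends with these coefficients in essentially the range $*\leq k/2$. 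Compatibility of the stabilization with the filtration propagates this range to the abutment, and the split injection of Theorem 1.2 combines with the basepoint projection $F^{k+1}\to F^k$ on the fiber to yield the global split injection.

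\textbf{Main obstacle.} The principal technical hurdle is the twisted-coefficient stability step: one must verify that the $\Sigma_k$-permutation system $\{H_q(F^k)\}_k$, viewed as a coefficient system for $\{\diff_k(W)\}_k$ via the surjection $\diff_k(W)\to\Sigma_k$, satisfies the right polynomiality condition for Theorem 1.2 to transfer in the stated $k/2$ range. This is clean when $F$ is simply connected, but when $G$ or $\diff_\partial(Q_0)$ has nontrivial fundamental group one has to control the monodromy in the local-coefficient Serre spectral sequence and confirm that the stability range is not degraded. A secondary subtlety is that the fibration $B\Sigma_G\diff(W_k)\to B\diff_k(W)$ must be set up via a (homotopy) section of $\Sigma_k\wr G\to\Sigma_k$, which is provided by the canonical inclusion $\Sigma_k\hookrightarrow\Sigma_k\wr G$.
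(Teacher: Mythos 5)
Your reduction runs the Serre spectral sequence of a fibration over $B\diff_k(W)$, a base that changes with $k$, and this is where the argument has a genuine gap: the step you flag as the "principal technical hurdle" is in fact the whole difficulty, and it is not resolved. Theorem 1.2 as available here is a constant-coefficient statement; what you need is homology stability for $\{B\diff_k(W)\}_k$ with coefficients in the $\Sigma_k$-permutation systems $\{H_q(F^k)\}_k$ (pulled back along $\diff_k(W)\to\Sigma_k$), i.e.\ a twisted/polynomial-coefficient stability theorem for these diffeomorphism groups. No such result is proved in this paper, it does not follow formally from Theorem 1.2 (one would need stability for the block-stabilizer subgroups arising from the K\"unneth/Shapiro decomposition of the induced modules, or a polynomial-functor framework \`a la Randal-Williams--Wahl, with a range that a priori depends on $q$), and the local coefficients are genuinely nontrivial whenever $G$ or $\diff(Q_1;\operatorname{rel}\partial)$ has nontrivial $\pi_0$ or $\pi_1$. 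The split-injectivity claim has the same problem: a splitting on the base together with a "basepoint projection" $F^{k+1}\to F^k$ on fibers does not assemble to a (stable) splitting of the map of total spaces; the paper's splittings come from the $\diff$-equivariant stable splitting of labelled configuration spaces (Theorem 2.1, via \cite{MT}) fed into Lemma 2.8 for Borel constructions. There is also a smaller issue at the start: surjectivity of $\Sigma_G\diff(W\sharp_kQ)\to\Sigma_G\diff(W_k)$ requires every element of $G$ to extend over $Q_1$, which is not automatic for $G\not\subset SO(d)$ (cf.\ the discussion at the end of Section 2.2), and the extension $\diff(Q_1;\operatorname{rel}\partial)\to\diff_G(Q_1)\to G$ need not split as $G\ltimes\diff_\partial(Q_1)$.

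The paper avoids all of this by choosing the other fibration: it fibres $B\Sigma_G\diff(W\sharp_kQ)\simeq B\Sigma_G^H\diff_k(W)$ (with $H=\diff_G(Q_1)$, $H_1=\ker(\rho\colon H\to G)$) over the \emph{$k$-independent} base $B\diff(W)$, with fibre the configuration space with twisted labels $C_k(W;\Fr\times_G BH_1)$ (and $C_k(W;\Fr/G)$ for the undecorated case), built from the evaluation fibration $E^G_k\colon\diff(W)\to C_k(W;\Fr/G)$ as in diagram (3.4). Then all $k$-dependence sits in the fibre, where stability is already known for labelled configuration spaces (Theorem 2.2), so the elementary comparison Lemma 2.7 gives the range $*\leq k/2$ on the abutment, and Theorem 2.1 with Lemma 2.8 gives the stable splitting — no twisted-coefficient stability for diffeomorphism groups is ever needed. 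If you want to salvage your route, you would have to prove the polynomial-coefficient analogue of Theorem 1.2 (and track how the range depends on the coefficient degree), which is substantially more work than the statement you are trying to prove.
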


We will prove a more general result, Theorem 3.4, which may be thought of as a homology stability theorem for diffeomorphisms of $W_k$ with decorations given by some subgroup of the diffeomorphisms of $Q$ or indeed any other group. 
For all three theorems we will also prove the stronger statement that 
the maps of the underlying spaces  are 
stable  split injections (in the sense of 
stable homotopy theory).
This uses the results in \cite {MT}. 
The other basic ingredients are a generalisation 
of what is known in dimension two as the Birman 
exact sequence  and repeated spectral sequence arguments 
building on the homology stability of configuration 
spaces with labels in a fibre bundle.
After collecting some preliminary results in section 2, the 
three theorems above are proved sequentially in section 3. 

There are also analogues of these results for mapping class groups, 
see Theorems 5.1, 5.2, and 5.3.
Theorem 5.1 had previously been proved 
by Hatcher and Wahl \cite {HatcherWahl}.
Theorem 5.2 seems to be new even in the case of surfaces. 
After establishing the homology stability of the fundamental group of
configuration spaces with twisted labels in section 4, the mapping class 
group analogues of the above theorems are stated and  
proved in section 5.

%??Theorem 1.3 for mapping class groups 
%was also proved by Hatcher-Wahl \cite {HatcherWahl} in the case of
% $3$-dimensional manifolds.

In section 6 we collect some variations, generalisations and consequences 
of our results.

%{\it Apologies:} The key results 
%were known to the author in 2009. They 
%were  first announced in a talk at the British Topology 
%Meeting 2011 and discussed
%in a talk at the Quillen Memorial Conference in 2012. The author apologises %for the delay of this paper. 

%%%%%%%%%%%%%%%%%%%%%%%%%%%%%%%%%%%%%%%%%%%%%%%%%%%%%%%%%%%%%%%%%%%%%%%%%
\section{Preliminary results}

In this section we will  recall  the homology stability and the stable splitting theorems for 
configuration spaces  with
twisted labels. 
We will also recall some basic results relating to the diffeomorphism groups and describe 
the spectral sequence argument that we will use repeatedly.

\subsection{Homology stability and stable splittings for configuration spaces}

Let $W$ be a smooth, compact, path-connected  manifold with non-empty boundary $\partial W$ of dimension $d\geq 2$. We fix a collar of $\partial W$ and a $(d-1)$-dimensional disk 
$ \partial _0$ in $\partial W$. Furthermore,   
let $\pi:E\rightarrow W$  
be a fibre bundle and assume that for each $w\in W$, the fibre 
$E_w$ over $w$
is path-connected. 
We define the configuration space of $k$ ordered particles in $W$ with 
twisted labels in $\pi$ as
\begin{equation*}
\widetilde C_k(W;\pi)\defeq \{(\mathbf{w},\mathbf{x})\in {\oW }^k \times E^k :
\pi (x_i) = w_i , w_i \neq w_j \text { for } i\neq j , 1 \leq i,j, \leq k \}.
\end{equation*}
and denote by $C_k (W; \pi)$ its orbit space under the natural action by the symmetric group $\Sigma _k$ by permutations.
Here $\oW$ denotes the interior of $W$.
When $E= W\times X$ for some space $X$ and $\pi$ is the projection onto $W$, this is the more familiar 
space of configurations of $W$ with labels in $X$ which we also denote by $C_k(W; X)$. When
$X$ is a point we use the notation $C_k(W)$.
The twisted labels we will  primarily be interested in 
are the 
tangent bundle $\tau: TW\to W$  and its associated frame bundle $\Fr$. 

We  now define inclusion maps that allow us to think of the
configurations of $k$ points as a subspace of the configuration
space of $k+1$ points.
Define $W^+ := W\cup \partial W\times [0,1]$ to be $W$ with a collar glued to its boundary, and extend $\pi $
to a bundle $\pi^+: E^+ \to W^+$ by
pull back along the natural projection $W^+ \to W$.
Fix points $w_0 \in \text {int} (\partial_0 \times [0, 1])$ and  $x_0
\in E^+_{w_0} $.
We define a stabilisation map
\begin{equation}
\tag{2.1}
 C_k (W; \pi) \overset \sigma \longrightarrow C_{k+1} (W^+; \pi^+)
        \longrightarrow C_{k+1} (W; \pi)
\end{equation}
where the first map adds the particle $(w_0,x_0)$ to any configuration
and the second map is a homeomorphism
induced by a map that isotopes $W^+$ into $W$ mapping
$\partial W\times [0, 1]$ into a collar of $\partial W$ in $W$ while
at the same time leaving the complement of the collar in $W$ fixed. We will generally suppress this map and identify $W^+$ with $W$.

Let $\diff (W; \pi)$ denote the group of automorphims of $\pi$ that cover a 
diffeomorphism $\beta \in \diff (W)$ with $\beta^* (\pi)$ and $\pi$ 
isomorphic as bundles over $W$. When $\pi$ is a trivial bundle or the tangent bundle
of $W$ then $\diff(W; \pi)$ contains $\diff(W)$.
Note that because of our assumption on the restriction of a diffeomorphism to a collar of $\partial W$ there is a natural extension map
$\diff( W; \pi) \to \diff (W^+; \pi^+)$ mapping $\psi$ to $\psi^+$ which is
defined on the extension by
$\psi ^+|_{\partial W \times [0,1]} =  \psi_{\partial W}\times 1$. In particular, $\psi^+$  fixes  $\pi^+|_{\partial_0 \times [0, 1]}$. 
With these definitions
it is easy to check that the inclusion $\sigma$ is $\diff(W; \pi)$-equivariant.

We will need the following two theorems. 

\begin{theorem} 
The stabilisation map $\sigma: C_k (W;\pi) \to C_{k+1} (W; \pi)$ is 
$\diff(W; \pi)$-equivariantly stably split injective.
\qed
\end{theorem}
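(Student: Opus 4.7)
The plan is to reduce the claim to the nonequivariant stable splitting theorem for configuration spaces with twisted labels established in \cite{MT}, and then to observe that the splitting constructed there can be arranged $\diff(W;\pi)$-equivariantly.

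First, I would recall from \cite{MT} the existence of a natural stable equivalence
\[
C_k(W;\pi) \simeqs \bigvee_{j=0}^{k} D_j(W;\pi),
\]
where $D_j(W;\pi)$ is the $j$th subquotient of the length filtration of $C_k(W;\pi)$ (a Thom-type space built from $\widetilde C_j(W;\pi)$ and the label data, with the boundary configurations collapsed). Under this equivalence the stabilisation map $\sigma\colon C_k(W;\pi)\to C_{k+1}(W;\pi)$ corresponds to the canonical inclusion of the first $k+1$ wedge summands into the first $k+2$: adding the particle $(w_0,x_0)$ does not change the stratum of a configuration already in the interior of $W$. A stable retraction is then given by the projection collapsing the last summand.

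Second, I would check that the splitting can be made $\diff(W;\pi)$-equivariant. The construction in \cite{MT} depends, as input data, only on the basepoint $w_0$ inside the external collar on $\partial_0$, the chosen label $x_0\in E^+_{w_0}$, and the product collar of $\partial W$ used to scan/compactify. Our standing hypotheses on $\diff(W;\pi)$ — every diffeomorphism is of the form $1\times\psi|_{\partial W}$ on the collar of $\partial W$ and fixes a neighbourhood of $\partial_0$, together with the naturally induced automorphism of $\pi$ — guarantee that each of these inputs is preserved by the group action. Consequently both the splitting map and the retraction onto the first $k+1$ summands commute with $\diff(W;\pi)$.

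The main obstacle lies in this second step: to get equivariance one has to look inside the construction of \cite{MT} rather than quote only its statement, verifying that every auxiliary choice (tubular neighbourhoods of the strata, cut-off functions for the scanning/electric-field map, the way the label bundle is extended over $W^+$, and so on) can be made $\diff(W;\pi)$-invariantly. Once this has been checked, the equivariant stable split injectivity of $\sigma$ follows from the existence of the equivariant retraction, and the theorem is proved.
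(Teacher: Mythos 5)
Your proposal matches the paper's treatment: the paper gives no independent argument, simply quoting B\"odigheimer's splitting for the non-equivariant case and citing \cite{MT} for the fact that the splitting (and its compatibility with the stabilisation map, via the wedge of filtration subquotients $D_j$) is $\diff(W;\pi)$-equivariant. The only difference is that you propose to re-verify the equivariance of the auxiliary choices inside the construction, whereas the equivariant statement is already the main result of \cite{MT} and can be quoted directly.
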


This is well-known in the non-equivariant setting by the Snaith splitting 
theorem when $W$ is Euclidean space and by a theorem of B\"odigheimer \cite 
{Boedigheimer} for general $W$. That these splittings are 
$\diff(W; \pi)$-equivariant was proved in \cite{MT}.

The following is a  generalisation of the standard homology stability for configurations 
spaces in a manifold, see  \cite {Segal}, \cite {McDuff}, \cite {RW}, to configuration spaces with 
 labels in a bundle. Indeed, the proof in \cite {RW} can be generalised to this case.  
For details of this proof and alternative proofs  
we refer to \cite {CPalmer} and  \cite {KupersMiller}.

\begin{theorem}
The stabilisation map $\sigma: C_k (W;\pi) \to C_{k+1} (W; \pi)$ induces an isomorphism
on  homology in degrees $* \leq k/2$. \qed
\end{theorem}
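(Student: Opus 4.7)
The plan is to adapt the semi-simplicial resolution argument of Randal-Williams \cite{RW}, as extended to configuration spaces with twisted labels in \cite{CPalmer} and \cite{KupersMiller}. The proof will proceed by induction on $k$, the low cases being vacuous.

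First, I would construct an augmented semi-simplicial space $X_\bullet \to C_k(W;\pi)$. A $p$-simplex consists of a configuration in $C_k(W;\pi)$ together with an ordered $(p+1)$-tuple of distinct additional points along a fixed embedded arc $\gamma$ in $W$ starting at the basepoint $w_0 \in \text{int}(\partial_0 \times [0,1])$ and disjoint from the ambient configuration, each equipped with a label in the fibre of $\pi$ over it. Face maps forget one of the auxiliary points, and the augmentation $|X_\bullet| \to C_k(W;\pi)$ forgets them all. Because the fibres of $\pi$ are path-connected, the labelled augmentation fibre over a configuration has the same connectivity as its unlabelled analogue, and a standard arc-complex argument (as in \cite{RW}) bounds this connectivity from below by a linear function of $k$ sufficient for the desired stability range.

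Second, I would run the spectral sequence of this augmented semi-simplicial space. Its $p$-th $E^1$-column is identified, after choosing sufficiently small disjoint tubular neighbourhoods of the auxiliary points, with the homology of $C_{k-p-1}(W'_p; \pi|_{W'_p})$ for a manifold $W'_p$ whose parametrised boundary disk has been enlarged so as to absorb those neighbourhoods. The stabilisation map $\sigma$ extends to a map of augmented semi-simplicial spaces, hence to a map of spectral sequences; by induction on $k$, the induced maps on lower columns are isomorphisms in the appropriate range. Combining this with the connectivity bound on $|X_\bullet|$ (which makes the augmentation a homology isomorphism in the same range) and a zigzag of five-lemma applications yields the result in degrees $* \leq k/2$.

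The main obstacle is verifying the required connectivity of the augmentation fibres and correctly identifying the $E^1$-columns with stabilised configuration spaces of the same form in the twisted-labels setting. The path-connectivity hypothesis on each $E_w$ enters at two distinct points: comparing the labelled and unlabelled arc complexes via a fibration argument with path-connected fibres, and ensuring that a consistent local trivialisation of $\pi$ exists along a neighbourhood of $\gamma$ so that the inductive hypothesis genuinely applies to the smaller configuration spaces appearing in the columns. Both are routine for the trivial bundle, but require some care in the bundle-twisted case; this is the content essentially carried out in \cite{CPalmer} and \cite{KupersMiller}.
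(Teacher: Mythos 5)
First, note that the paper does not prove this statement itself: Theorem 2.2 is quoted from the literature, with the remark that the argument of \cite{RW} generalises to labels in a bundle and with \cite{CPalmer}, \cite{KupersMiller} cited for details. Your plan is in that spirit, but as written the central construction is garbled in a way that breaks the whole argument. You take a $p$-simplex to be a $k$-point configuration together with $(p+1)$ \emph{additional} labelled points on a fixed arc $\gamma$, disjoint from the configuration, and the augmentation forgets these extra points. With that definition the augmentation fibre over a fixed configuration is the semi-simplicial space of ordered tuples of distinct labelled points on $\gamma$ minus finitely many points; it is (essentially) contractible, and in particular its connectivity has nothing to do with $k$ --- the ``linear in $k$'' connectivity bound you invoke is a feature of complexes built from the $k$ points of the configuration itself (injective words on $k$ letters are $(k-2)$-connected; tether complexes have connectivity linear in $k$), not of complexes of freely chosen auxiliary points. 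Worse, the space of $p$-simplices in your resolution is homotopy equivalent to $C_k(W;\pi)$ times the space of $(p+1)$ labelled ordered points on an interval, so the $E^1$-columns are (shifted copies of) $H_*(C_k(W;\pi))$ itself, not $H_*(C_{k-p-1}(W'_p;\pi))$ as you claim; removing tubular neighbourhoods of the auxiliary points and enlarging $\partial_0$ does not reduce the number of configuration points. The spectral sequence then merely reproduces $H_*(C_k)$ and gives no inductive leverage, so the induction on $k$ never gets started.

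The fix is the one actually used in \cite{RW} and in its labelled generalisations: a $p$-simplex must single out $(p+1)$ ordered points \emph{of the configuration} (in the twisted setting, together with their labels), either bare --- in which case the augmentation fibre is the complex of injective words on the $k$ configuration points, whence the $(k-2)$-connectivity --- or tethered by disjoint arcs to $\partial_0$, with connectivity again linear in $k$. Forgetting the distinguished points and pushing them (with their labels, using path-connectivity of the fibres of $\pi$ and a trivialisation near $\partial_0$) into a collar of $\partial_0$ is what identifies the $p$-th column with $C_{k-p-1}$ of $W$ with a modified boundary disk, compatibly with the stabilisation maps, and only then does your inductive five-lemma scheme go through. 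The remainder of your outline (compatibility of $\sigma$ with the resolution, the two places where path-connectedness of $E_w$ enters, the deferral of the detailed verifications to the treatments in \cite{CPalmer} and \cite{KupersMiller}) is consistent with the cited proofs, but as it stands the resolution you define does not have the two properties your spectral sequence argument uses.
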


\subsection{ Some lemmata concerning diffeomorphism groups}

We will  need the following well-known transitivity properties of the diffeomorphism groups.

\begin{lemma} 
(i) Given any two sets $\{x_1, \dots , x_k\}$ and $\{w_1, \dots, w_k\}$
 of $k$ points in the interior of the connected
manifold $W$ there is a diffeomorphism $\psi \in \diff (W; \partial_0)$ such 
that $\psi (x_i) = w_i$ for all $i= 1, \dots, k$.

(ii) Furthermore, when $W$ is not orientable,
given any linear maps $L_i: T_{x_i}W \to T_{w_i} W$,  
$\psi $ may be chosen so that $D_{x_i}\psi = L_i$. When $W$ is orientable
the same is true as long as the $L_i$ are all orientation preserving.
\end{lemma}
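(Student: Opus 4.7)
The plan is to prove (i) by induction on $k$ via a standard isotopy-along-paths argument, then upgrade to (ii) by constructing compactly supported diffeomorphisms near each $w_i$ that realize any prescribed derivative.

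For (i), the base case $k=1$ runs as follows. Given $x,w$ in the interior of $W$ outside the fixed collar of $\partial_0$, choose an embedded smooth path $\gamma:[0,1]\to W$ from $x$ to $w$ staying in the interior and avoiding the collar; such a path exists because $W$ is path-connected and $\dim W\geq 2$. Pick a thin tubular neighborhood $U$ of $\gamma$ disjoint from the collar, and integrate a compactly supported vector field on $U$ that extends $\dot\gamma$ along $\gamma$; the time-one map is a diffeomorphism sending $x$ to $w$, supported in $U$, and so lies in $\diff(W;\partial_0)$. The inductive step applies the same construction inside $W\setminus\{w_1,\dots,w_j\}$, which remains path-connected because $\dim W\geq 2$, choosing the tubular neighborhood thin enough to avoid the previously placed points.

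For (ii), it suffices after (i) to show that, for any finite set of interior points $\{w_1,\dots,w_k\}$ and any $M_i\in \GL(T_{w_i}W)$ of the appropriate determinant sign, there exists $\phi\in\diff(W;\partial_0)$ fixing each $w_i$ with $D_{w_i}\phi=M_i$. Choose disjoint open disk neighborhoods of the $w_i$'s that avoid the collar, so that the problem localizes to a single point $w$. For $M$ in $\GL^+$, pick a smooth path $\{A_t\}_{t\in[0,1]}$ in $\GL^+(d)$ from $\id$ to $M$ (possible since $\GL^+(d)$ is path-connected) and a radial bump $\rho$ supported in a small ball around $w$ with $\rho(0)=1$; in local coordinates the formula $v\mapsto A_{\rho(|v|)}(v)$ defines a compactly supported diffeomorphism fixing $w$ with derivative $M$ at $w$. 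This settles (ii) entirely when $W$ is orientable.

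The remaining case is an orientation-reversing $M$ with $W$ non-orientable. Choose an embedded loop $\gamma$ based at $w$ along which the orientation reverses, disjoint from the collar of $\partial_0$ and from the other $w_j$; its tubular neighborhood is modelled on a non-orientable $D^{d-1}$-bundle over $S^1$. A compactly supported isotopy carrying a small disk about $w$ once around $\gamma$ returns a diffeomorphism fixing $w$ whose derivative at $w$ is orientation-reversing, reflecting the $\mathbb{Z}/2$ holonomy of the orientation bundle along $\gamma$; composing with the $\GL^+$ construction above then realizes any orientation-reversing $M$. I expect the main obstacle to be this final step: the ``slide along $\gamma$'' must be arranged so that the resulting diffeomorphism is genuinely compactly supported in the tubular neighborhood, fixes $\partial_0$ and the other marked points, and achieves the prescribed orientation-reversing derivative at $w$; everything else is routine mobility of points via compactly supported isotopies.
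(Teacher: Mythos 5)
Your overall route is the same as the paper's: slides along embedded paths give (i) and reduce (ii) to realizing a prescribed derivative at a fixed point, an orientation-reversing derivative in the non-orientable case is produced by sliding around a loop with non-trivial orientation holonomy, and an orientation-preserving derivative is produced by a compactly supported local model built from a path $A_t$ in $\GL_d^+(\mathbb R)$. The genuine gap is in that local model. The map $\Psi(v)=A_{\rho(|v|)}(v)$ for an arbitrary smooth path from $\id$ to $M$ and an arbitrary radial bump $\rho$ is \emph{not} in general injective, so "the formula defines a compactly supported diffeomorphism" is unjustified as stated. Concretely, take $M=\lambda\,\id$ with $\lambda$ large, the linear path $A_t=((1-t)+t\lambda)\id$, and a bump with $\rho\equiv 1$ on the ball of radius $a$ and $\rho\equiv 0$ outside the ball of radius $b$ with $\lambda a>b$: then $\Psi$ maps the ball of radius $a$ onto the ball of radius $\lambda a$, while being the identity outside the ball of radius $b$, so points $u$ with $b<|u|<\lambda a$ satisfy $u=\Psi(u)=\Psi(u/\lambda)$ and $\Psi$ is not injective. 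This is exactly the point the paper takes care of: it rescales the interpolation by the factor $|v|/|A(|v|)(v)|$ so that each sphere $|v|=r$ is mapped to itself (whence injectivity sphere-by-sphere), and it treats the inner ball separately via $\rho\circ L_i$ with a radial correction. Your argument can be repaired either by this normalization, by constraining $\rho$ and the path so that injectivity can be checked, or more slickly by applying the isotopy extension theorem to the isotopy of embeddings $A_t|_{D_\delta}$ of a tiny disk, which produces an ambient compactly supported diffeomorphism agreeing with $M$ near $w$; but some such step is needed.

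By contrast, the step you single out as the main obstacle (the orientation-reversing slide) is unproblematic and is handled in the paper in one sentence: a slide (point-push) along a loop representing an element of $\pi_1(W)$ with non-trivial image in $\pi_1(B\GL_d(\mathbb R))=\mathbb Z/2\mathbb Z$ is by construction supported in a tubular neighbourhood of the loop, fixes the other marked points and $\partial_0$, and has orientation-reversing derivative at $w$; composing with the orientation-preserving local construction then realizes an arbitrary $L_i$. Your part (i), done by induction with slides in tubular neighbourhoods avoiding previously placed points, matches the paper's argument and is fine.
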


\begin{proof}
For (i) choose paths connecting $x_i$ to $w_i$ and consider the associated 
slides. As $d\geq 2$, the paths and slides 
can be chosen so as to avoid any of the other points $x_j$ 
and $w_j$ for $j\neq i$. Performing one slide after the other defines a 
diffeomorphism $\psi$ with the required properties. 

For (ii) first note that by part (i) we may assume that $x_i = w_i$. Next note that if $W$ is non-orientable 
we can choose a path connecting $x_i$ to itself
along which the slide induces an orientation reversing map of tangent spaces. 
This corresponds to an element in $\pi_1 (W; x_i)$ mapping to the  non-trivial
element in $\mathbb Z/ 2\mathbb Z = \pi _1 (BGL_d(\mathbb R))$ via the map induced by the classifying map of the tangent bundle.
Thus, it is enough to construct for any orientation preserving linear map $L_i$ from $ T_{x_i}W  $
to itself, a diffeomorphism $\alpha_i$ with $D_{x_i} \alpha_i = L_i$
that is the identity
outside a small neighbourhood of $x_i$. 
\footnote{The existence of such maps can also be deduced 
from  (2.2) below with $W=M$ and $K$ the union of $k$ closed disks 
centered at $ x_1, \dots, x_k$.}

To do so, choose a metric on $W$ 
and $\epsilon >0$ small enough so that the exponential map $\exp : T_{x_i} W \to W$
is injective on the $\epsilon$-ball around zero and the closure of its image does not contain $x_j$ for $i\neq j$.
Define $\alpha_i :W \to W$ to be the identity outside this image and otherwise by
$$
\alpha _i (x) := (\exp \circ \Phi \circ \exp ^{-1}) (x)
$$
for some diffeomorphisms $\Phi$  of $T_{x_i}W$ satisfying $D_0\Phi = L_i$ and $\Phi (v) =v$ for $|v| \geq \epsilon$.
Then in particular $D_{x_i} \alpha _i = L_i$.
It remains to prove $\Phi$ exists, 
and we  now sketch the construction of such a $\Phi$.
 
As $L_i$ is orientable, we may choose a smooth path $A(t)$ in $\GL^+ _d (\mathbb R)$ from $L_i$ to  the identity
which is defined for $t \in [\epsilon/2, \epsilon]$ and constant near the boundary. 
For $|v| \in [\epsilon /2, \epsilon]$  define 
$$
\Phi (v) := A(|v|) (v) (|v|/|A(|v|)(v)|)  . 
$$
The multiplication by the scalar $ |v|/|A(|v|)(v)|$ ensures that 
the sphere of radius $|v|$ is mapped to itself.  Thus $\Phi$ is 
a diffeomorphism on
its domain of definition and we can extend it by the identity for $v$ 
with $|v|
\geq \epsilon$.
For $|v| \leq \epsilon /2$ define
$$
\Phi (v) = (\rho \circ  L_i)( v)
$$
where $\rho $ is a suitable diffeomorphisms of $T_{x_i} W$ that maps the convex image under $L_i$ of the $\epsilon /2$-ball back to itself, 
for example
by using a radial contraction/expansion map which is constant around $0$. Then $\Phi$ is also a diffeomorphism on the $\epsilon /2$-ball
and satisfies $D_0\Phi = L_i$.
\end{proof}

The group $\diff_1 (W)$ can be identified with the group
$\diff ( W; D \coprod \partial_0)$
of diffeomorphisms of $W$ that fix an embedded closed disk $D$ point-wise in addition to
$\partial_0$,
and $\diff ^1(W) $ can be identified with the group
$\diff (W; w_1 \cup \partial _0)$ of diffeomorphisms that fix the additional point $w_1$.
Let $\diff (W; w_1 \cup \partial _0; T_{w_1}W)$
denote the group of diffeomorphisms
which fix the point $w_1$ and its tangent space.
With these identifications there are canonical injective homomorphisms
$$
\diff_1 (W)  \overset \iota \longrightarrow 
\diff (W; w_1 \cup \partial_0; T_{w_1}W) \longrightarrow
\diff^1 (W)
$$
connecting these three groups, each forgetting some of the structure.

\begin{lemma} The map
$\diff_1 (W)  \overset \iota \longrightarrow
\diff (W; w_1 \cup \partial_0; T_{w_1}W)$
is a homotopy equivalence.
\end{lemma}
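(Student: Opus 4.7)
The plan is to realise $\iota$ as the inclusion of a fibre in a Serre fibration whose base is weakly contractible.

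Let $\phi_1 \colon D \to W$ denote the chosen parametrised embedded disk, with $\phi_1(0) = w_1$ and $D_0\phi_1 \colon \mathbb{R}^d \xrightarrow{\sim} T_{w_1}W$ the fixed identification used to define the middle group. Let $E$ be the space of smooth embeddings $\phi \colon D \to W$ satisfying $\phi(0) = w_1$ and $D_0\phi = D_0\phi_1$. Postcomposition defines an action of $\diff(W; w_1 \cup \partial_0; T_{w_1}W)$ on $E$, and the stabiliser of $\phi_1$ is precisely $\diff_1(W)$. By the parametrised isotopy extension theorem of Palais, the orbit map
\begin{equation*}
\rho \colon \diff(W; w_1 \cup \partial_0; T_{w_1}W) \longrightarrow E, \qquad \psi \longmapsto \psi \circ \phi_1
\end{equation*}
is a Serre fibration onto the path component $E_0$ of $\phi_1$; surjectivity uses that any path $(\phi_t)$ in $E_0$ has constant first-order data at $0$, so an ambient extension can be chosen to fix $w_1$ and $T_{w_1}W$ throughout. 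This yields a fibre sequence
\begin{equation*}
\diff_1(W) \xrightarrow{\ \iota\ } \diff(W; w_1 \cup \partial_0; T_{w_1}W) \longrightarrow E_0,
\end{equation*}
so it suffices to prove that $E_0$ is weakly contractible.

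For this I would use the classical rescaling argument behind the contractibility of the space of tubular neighbourhoods of a point. Pick a chart $c \colon V \to W$ around $w_1$ with $c|_D = \phi_1$ (after shrinking $D$ if necessary). Any $\phi \in E_0$ whose image lies in $c(V)$ reads in the chart as an embedding $\tilde\phi \colon D \to \mathbb{R}^d$ with $\tilde\phi(0) = 0$ and $D_0\tilde\phi = \id$. The rescaled family $\tilde\phi_s(v) := s^{-1}\tilde\phi(sv)$, $s \in (0,1]$, extends continuously at $s = 0$ to $\id$ by the Taylor expansion $\tilde\phi(v) = v + O(|v|^2)$, providing a canonical deformation of $\phi$ through embeddings with the correct first-order data to $\phi_1$.

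The main technical obstacle is carrying out this rescaling on the whole parametrised disk $D$ rather than only on germs at $0$, since an arbitrary $\phi \in E_0$ need not map into a single chart. I would handle this in two stages: first deformation-retract $E_0$ onto the subspace of embeddings whose image lies inside $c(V)$ by precomposing $\phi$ with a radial shrinking of $D$ onto a small concentric subdisk (together with a simultaneous rescaling so that the prescribed first-order data at $0$ is preserved throughout the homotopy); then apply the rescaling above in the chart. The composite is a deformation retraction of $E_0$ onto $\{\phi_1\}$, and combined with the fibre sequence above it yields the required homotopy equivalence.
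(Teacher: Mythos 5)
Your argument is correct in substance, but it is organised differently from the paper's. The paper never fibres the group $\diff (W; w_1 \cup \partial_0; T_{w_1}W)$ over a space of constrained embeddings; instead it compares two fibrations with the same total space $\diff(W)$, namely restriction to the disk, $\diff_1(W) \to \diff(W) \to \emb(D,W)$, and restriction to the $1$-jet at $w_1$, $\diff(W; w_1\cup\partial_0; T_{w_1}W) \to \diff(W) \to \Fr$, and shows that the map of base spaces $\emb(D,W)\to \Fr$ is a homotopy equivalence (via the contractibility of the space of collars, (2.3), and of the space of tubular neighbourhoods of $w_1$, (2.4)); the equivalence of fibres then follows by comparing the two fibrations. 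Your route instead takes the orbit map $\psi\mapsto\psi\circ\phi_1$ of the middle group on the space $E$ of embeddings with prescribed $1$-jet at $0$, identifies the stabiliser with $\diff_1(W)$, and contracts the base by rescaling. The geometric core is the same in both cases -- contractibility of the space of disk embeddings with fixed $1$-jet at the centre, i.e.\ of tubular neighbourhoods of $w_1$ -- but the paper quotes this fact while you sketch its proof, and the paper only ever invokes Palais's theorem in its standard form $\diff(W)\to\emb(K,W)$, whereas you must check that the restricted orbit map is still a fibration; this does work, and for the reason you implicitly use: since $D$ has codimension zero, any Palais local section $s$ with $s(\phi')\circ\phi=\phi'$ automatically satisfies $s(\phi')(w_1)=w_1$ and $D_{w_1}s(\phi')=\id$ when $\phi,\phi'$ have the same $1$-jet at $0$, so the sections land in the subgroup. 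Two small points to tidy: your fibre sequence should a priori have as base the full orbit of $\phi_1$, which could exceed $E_0$; this is repaired exactly by your second half, since the rescaling contraction applies to every element of $E$ and hence shows $E$ is connected, so $E=E_0$ equals the orbit. And in the chart-rescaling $\tilde\phi_s(v)=s^{-1}\tilde\phi(sv)$ the image can temporarily leave the chart, so one should first shrink the domain (by precomposition with embeddings of $D$ into small subdisks that are the identity near $0$, so as to preserve the $1$-jet) by an amount chosen uniformly over a compact family -- enough for weak contractibility, which is all the fibre sequence needs. The paper's set-up has the mild advantage that the same pair of diagrams simultaneously handles $\diff^1(W)$ and is reused in the proof of Lemma 2.5; yours is more self-contained for this single statement.
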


In the proof we will repeatedly use the following result, see \cite {Palais}.
Let $K$ be a compact sub-manifold of $W$ and $M$ be another smooth manifold. 
Then
the restriction map from the space of smooth embeddings of $W$ into $M$
to the space of smooth embeddings of $K$ into $M$ is a locally trivial 
fibre bundle:
\begin{equation*}
\tag {2.2}
\text{Emb } (W, M) \longrightarrow \text {Emb } (K, M).
\end{equation*} 
When $W=M$  the space of embeddings $\text{Emb } (W,M)$ may be replaced by the space of 
(compactly supported) diffeomorphisms $\diff (W)$, see \cite {Lima}.

\begin{proof}
Using (2.2) with $K=D$, we can show that the following 
are maps of (horizontal) fibre bundles
\begin{equation*}
\xymatrix{
\diff _1( W) \ar[d] ^\simeq \ar[r] &\diff (W) \ar[d]^ = \ar[r] 
	&\text{Emb} (D, W) \ar[d]^\simeq \\
\diff ^1( W;w_1 \cup \partial_0; T_{w_1}W) \ar[d]\ar[r] &\diff (W) \ar[d]^ = \ar[r] 
	&\Fr \ar[d]\\
\diff ^1( W) \ar[r] &\diff (W) \ar[r] & W
}
\end{equation*}
where the right horizontal maps are the restriction maps to $D$, $Iso (
T_0 D,  T_{w_1} W)$
and $w_1$.
Furthermore, evaluation at $0 \in D$ and projection from the frame bundle to 
$W$ give rise to compatible fiber bundles
\begin{equation*}
\xymatrix{
\text {Emb} (D, 0; W, w_1) \ar[d]^\simeq \ar [r] 
	&\text{Emb} (D, W) \ar[d] ^\simeq \ar[r] &W \ar[d]^= \\
\GL_d (\mathbb R) \ar[r] &\Fr \ar[r] &W.
}
\end{equation*}
We will now argue that the left arrow is a
 homotopy equivalence, hence so is the middle one, 
and therefore also the two arrows in the previous
diagram which are labelled as homotopy equivalences.

The space of collars for the image of $D$ in $W$ is contractible for any embedding. Hence, 
\begin{equation*}
\tag  {2.3}
\text {Emb} (N(D), 0; W, w_1)\simeq 
\text {Emb} (D, 0; W, w_1)  
\end{equation*}
where $N(D)$ is an open disk containing $D$. The fiber over the identity of the forgetful map
\begin{equation*}
\tag {2.4}
\text {Emb} (N(D), 0; W, w_1) \longrightarrow \Fr|_{w_1} \simeq  \GL_d (\mathbb R)
\end{equation*}
is the space of tubular neighborhoods of $w_1$ in $W$ and hence is contractible.
(The intuition is that by combing 
from the origin  $w_0$  any diffeomorphism fixing its tangent space  can be homotoped
to the identity in some neighbourhood of $w_0$.)
\end{proof}

We will be interested in a variation of this lemma.
Fix $G\subset O(d)$ and denote  by $\diff_G(W_1)$  the group of diffeomorphisms of $W_1 = W\setminus \text{int}(D)$ that
restrict on (a neighbourhood of) the  boundary sphere to an
element in $G$.
There are canonical injective homomorphisms
$$
\diff_1 (W) \longrightarrow \diff _G (W_1) \overset \iota \longrightarrow \diff^1 (W)
$$
where the left map is the inclusion, which is defined for all $G$,  and the  right map is defined by extending
the diffeomorphisms canonically from the boundary sphere  $S^{d-1}$ to the disk $D^d$. This is where we use the condition that $G \subset O(d)$.
%and the action of $G$ on $S^{d-1}$ can be extended in a canonical way to 
%$D^d$.
Note that the midpoint $w_1$ of $D^d$ is fixed and the induced map on its tangent space is the same 
element of $O(d)$ as that defining the map on $S^{d-1}$.  
%By the same argument as for the above we can prove the following result.
Let $\diff^1_G (W )$ denote the subgroup of $\diff ^1(W)$ consisting of those diffeomorphisms  whose induced map on the tangent space $T_{w_1}W$
is an element in $G$.

\begin{lemma}
The map $\diff _G (W_1) \overset \iota \longrightarrow \diff^1_G (W)$
is a homotopy equivalence.
In particular, the map $\diff _{O(d)} (W_1) \overset \iota \longrightarrow \diff^1 (W)$ is a homotopy equivalence.
\end{lemma}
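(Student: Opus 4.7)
The plan is to reduce to Lemma 2.4 by exhibiting both sides as total spaces of fibre bundles over $G$, with $\iota$ covering the identity of $G$, and recognising the induced map on fibres as the $G=\{1\}$ case already handled.

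Restriction to the parametrised boundary sphere $\partial D$ gives, by hypothesis, a surjection $r_1 \colon \diff_G(W_1) \to G$; and the derivative at $w_1$ (using the frame induced by the chosen parametrisation of $D$) gives a surjection $r_2 \colon \diff^1_G(W) \to G$. I would show that both are locally trivial fibre bundles. For $r_1$, pull back the Palais--Lima fibration (2.2), $\diff(W_1;\partial_0) \to \emb(\partial D, W_1)$, along the inclusion of $G$ by reparametrisations of the standard $\partial D \hookrightarrow W_1$; this identifies $\diff_G(W_1)$ as the pullback and $r_1$ as the projection. For $r_2$, it is the restriction of the fibration $\diff(W) \to \Fr$ used in the proof of Lemma 2.4 to the preimage of the $G$-orbit of the chosen frame at $w_1$. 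The fibres over $1 \in G$ are $\diff(W_1;\partial D \cup \partial_0)$ and $\diff(W; w_1 \cup \partial_0; T_{w_1}W)$ respectively. By construction of $\iota$, if $\phi \in \diff_G(W_1)$ restricts to $g \in G \subset O(d)$ on $\partial D$, then $\iota(\phi)$ acts on $D^d$ by the linear map $g$ and so has derivative $g$ at $w_1 = 0$; hence $r_2 \circ \iota = r_1$.

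The main step is then to identify the induced map on fibres as a homotopy equivalence. This fibre map extends a diffeomorphism of $W_1$ fixing $\partial D$ pointwise by the identity across $D$, and it factors through $\diff_1(W)$: the subgroup of $\diff(W_1;\partial D \cup \partial_0)$ consisting of diffeomorphisms fixing a full collar of $\partial D$ is homotopy equivalent to $\diff(W_1;\partial D \cup \partial_0)$ itself (the space of collars is contractible), and extension by the identity across $D$ identifies this collar-fixing subgroup with $\diff_1(W) = \diff(W; D \cup \partial_0)$. The resulting composite map to $\diff(W; w_1 \cup \partial_0; T_{w_1}W)$ is precisely the equivalence proved in Lemma 2.4. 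By the five lemma applied to the long exact sequences in homotopy of $r_1$ and $r_2$, the map $\iota$ is a weak equivalence, hence a homotopy equivalence between these CW-type topological groups. The final statement for $G = O(d)$ follows on combining this with the evident homotopy equivalence $\diff^1_{O(d)}(W) \hookrightarrow \diff^1(W)$, valid because $\GL_d(\mathbb{R})/O(d)$ is contractible.

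The technical obstacle I expect is verifying the fibre bundle structure of $r_1$ and $r_2$ and making the collar identification of $\diff_1(W)$ with the fibre of $r_1$ rigorous; both are essentially standard, but a little care is needed because $\partial D$ lies in $\partial W_1$ rather than its interior. Once these are in hand, Lemma 2.4 carries the essential weight.
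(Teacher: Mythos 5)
Your proposal is correct and follows essentially the same route as the paper: both exhibit $\iota$ as a map of fibre bundles over $G$ (restriction to the boundary sphere on one side, the derivative at $w_1$ on the other) covering the identity of the base, identify the induced map on fibres with the homotopy equivalence of Lemma 2.4 (your collar argument makes explicit the identification of the fibre with $\diff_1(W)$ that the paper asserts directly), and conclude by comparing long exact sequences, with the final statement coming from $O(d)\simeq \GL_d(\mathbb{R})$ in both cases.
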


\begin{proof}
Consider the following commutative diagram of fibre bundles
\begin{equation*}
\xymatrix{
\diff_1 (W) \ar[d]^\simeq \ar[r] &\diff_G(W_1) \ar[d]^\iota \ar[r] &G \ar[d]^= \\
\diff(W; w_1 \cup \partial _0; T_{w_1}W) \ar[r] &\diff^1_G(W) \ar[r]&G,
}
\end{equation*}
where the vertical arrows are the canonical inclusion maps and the right 
horizontal arrows are 
given by restriction to the boundary and the tangent space 
at $w_1$ respectively.
Indeed, the top row is the part of the bundle
$$
\diff (W_1) \longrightarrow \diff (\partial W_1 \setminus \partial W)
$$
lying over the subgroup $G$. Similarly, the bottom row is the part of 
the bundle
$$
\diff ^1 (W) \longrightarrow \GL_d \mathbb R
$$
lying over the subgroup $G$.
 As the map on
the base spaces is the identity and the map on the fibre spaces 
is a homotopy equivalence by Lemma 2.4, it follows that also the map of 
total spaces is a homotopy equivalence. The second statement in the lemma 
follows from the homotopy equivalence $O(d) \simeq \GL_d (\mathbb R)$.
\end{proof}

We finish this section with some remarks that put our choices of groups above into context.
Taking  $W=M= S^d$ and $K= D^d$  in (2.2) we  get a fibre bundle
\begin{equation*}
\tag{2.5}
\diff (D^d; \, \text{rel }  \partial ) \longrightarrow \diff (S^d) \longrightarrow 
\text {Emb } (D^d, S^d) 
\end{equation*}
for all $d> 0$.
The orthogonal group $O(d+1)$ injects via $\diff (S^d)$
into $\text {Emb } (D^d, S^d) $. Following this by the evaluation 
map at the origin $0\in D^d$ gives rise to a map of fibre bundles
\begin{equation*}
\xymatrix{
O(d) \ar[d] ^\simeq \ar[r] &O(d+1) \ar[d]^\simeq \ar[r]  & S^d\ar[d]^= \\
\text {Emb } (D^d, 0; S^d, w_1) \ar[r] &\text {Emb } (D^d, S^d) \ar[r]& S^d
}
\end{equation*}
where all vertical maps are homotopy equivalences by (2.3) and (2.4).
%The associated homogeneous space
%can be identified with the group $\diff (S^{d-1} ; w_1 \cup \partial_0 ; T_{w_1}S^{d-1})$  of diffeomorphisms of the sphere that fix
%a unit vector $w_1$ and its tangent space. 
We thus have a homotopy equivalence
of spaces
\begin{equation*}
\tag{2.6}
\diff (S^d) \simeq O(d+1 ) \times \diff (D ^d; \, \text{rel } \partial).
\end{equation*}
Note that here $\diff (S^d)$ includes the orientation reversing diffeomorphisms. Restricting to the orientation preserving
diffeomorphisms this is Proposition 4 of \cite{Cerf68}, page 127.

\begin{remark}
By theorems of Smale \cite{Smale} and Hatcher \cite {Hatcher} the group  $\diff (D^d, \text{rel } \partial)$
is contractible for $d=2, 3$.
For $d\geq 5$, the mapping class group $\pi_0 (\diff (D^d, \text{rel } \partial))$ is known
to be isomorphic to the group $\Theta _{d+1}$ of exotic spheres in dimension $d+1$ by theorems of
Smale \cite {Smale61} and Cerf \cite {Cerf70}. The exotic spheres are constructed by a clutching construction that glues two disks
$D^{d+1}$ together using elements in $\diff (D^d; \text{rel } \partial)
\subset \diff(S^d)$.
\end{remark}

\vskip .2in
The question arises in our context which diffeomorphism of the $k$ boundary
spheres is the  restriction of  some symmetric diffeomorphism of
$W\sharp_k Q$. First note, that if $\alpha \in \diff _0 (S^{d-1})$ is an
element in the identity component, then it is such a restriction as any path to the identity
defines a diffeomorphisms $\hat \alpha$ of $S^{d-1} \times [-1,1]$ with
$\hat \alpha |_{S^{d-1} \times \{ 0 \}} = \alpha$
and $\hat \alpha |_\partial = 1$
which can be extended to the whole of $W\sharp _k Q$ by the identity.
By the Lemma 2.3 then also any element in the wreath product
$\Sigma_k \wr \diff_0 (W\sharp_k Q)$ is in the image of the restriction map.
When $W$ is not orientable, this holds also if the 
$-1$ component defined by the non-zero element in $\pi_0 (GL_d(\mathbb R))$ is included.
Our methods here however force us to restrict ourselves to the  case when
$G \subset O(d)$ is a closed subgroup.

\subsection{ Spectral sequence arguments and stable splittings}

We will use the following well-known spectral sequence arguments repeatedly.

\begin{lemma}
Let  $ f: E^\bullet_{p,q} \to \widetilde E^\bullet _{p,q}$ be a map of homological first quadrant spectral sequences.
Assume that 
$$
f: E^2_{p,q} \overset \simeq \longrightarrow \widetilde E^2_{p,q}
\quad \quad \text { for } 0\leq p < \infty \text { and } 0\leq q \leq l.
$$
Then $f$ induces an isomorphism on the abutments in degrees $* \leq l$.   
%Furthermore, if $f$ is split injective on the $E^2$-page so it is on the 
%abutments.
\end{lemma}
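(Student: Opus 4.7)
The result is a standard spectral sequence comparison; I would prove it using the strong convergence of first-quadrant spectral sequences together with iterated five-lemma arguments.

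First, by convergence, each abutment $H_n$ carries a finite filtration $0 \subseteq F_0 H_n \subseteq F_1 H_n \subseteq \cdots \subseteq F_n H_n = H_n$ with $F_p H_n / F_{p-1} H_n \cong E^\infty_{p, n-p}$, and analogously for $\widetilde H_n$. The map $f_*$ preserves these filtrations, so an iterated five-lemma on the short exact sequences $0 \to F_{p-1} H_n \to F_p H_n \to E^\infty_{p, n-p} \to 0$ reduces the problem to showing that $f^\infty \colon E^\infty_{p, q} \to \widetilde E^\infty_{p, q}$ is an isomorphism for every $(p, q)$ with $p + q \leq l$. Since the spectral sequences are first-quadrant, $E^\infty_{p, q}$ equals $E^r_{p, q}$ as soon as $r > \max(p, q+1)$, since then all incoming and outgoing differentials at $(p,q)$ vanish for dimensional reasons. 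Hence it suffices to prove that $f^r \colon E^r_{p, q} \to \widetilde E^r_{p, q}$ is an iso for all $r \geq 2$ and all $(p,q)$ with $p + q \leq l$.

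The natural approach is induction on $r$. The base case $r = 2$ is immediate, since $p + q \leq l$ implies $q \leq l$ so the hypothesis applies. For the inductive step at $(p, q)$ with $p + q \leq l$, one uses the presentation $E^{r+1}_{p, q} = \ker(d_r^{p, q}) / \operatorname{im}(d_r^{p+r, q-r+1})$ together with the five-lemma. When the outgoing differential is nonzero we have $p \geq r$, so its target $(p-r, q+r-1)$ has total degree $p+q-1 \leq l - 1$ and thus falls within the inductive range.

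The main obstacle is the source of the incoming differential at $(p+r, q-r+1)$, whose total degree $p + q + 1$ may be as large as $l + 1$ and hence lie outside the triangle $\{p+q \leq l\}$. However, its row satisfies $q - r + 1 \leq q \leq l$, so it always lies in the strip where the hypothesis provides iso on $E^2$. I would resolve this by strengthening the inductive statement to cover every position in the orbit of the triangle $\{p+q \leq l\}$ under iterated differentials on the $E^r$-pages; an elementary check confirms that every such reachable position remains in a row $\leq l$ (each outgoing step strictly decreases the total degree while keeping the row $\leq$ old-total$-1$, and each incoming step preserves the row bound since the source row $\leq$ the target row). Therefore the $E^2$-hypothesis covers all positions the induction ever encounters, the iso propagates through the pages, and the induction closes.
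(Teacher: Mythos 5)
Your reduction to the $E^\infty$-page and your identification of the real difficulty (the sources $(p+r,q-r+1)$ of incoming differentials have total degree $p+q+1$, which can exceed $l$) are both fine, but the way you dispose of that difficulty contains a genuine gap. Your strengthened induction demands isomorphisms, hence the $E^2$-hypothesis and so row $\leq l$, at \emph{every} position in the orbit of the triangle $\{p+q\leq l\}$ under iterated differentials, and your parenthetical check does not establish this: an outgoing step only bounds the new row by (old total degree)$\,-1$, and orbit positions can have total degree $\geq l+2$ because incoming steps raise the total degree. Concretely, for $l\geq 2$ the triangle position $(0,l)$ receives a $d_2$ from $(2,l-1)$, which receives a $d_2$ from $(4,l-2)$, and $(4,l-2)$ supports a $d_4$ with target $(0,l+1)$; thus $(0,l+1)$, which lies in row $l+1$ where the hypothesis says nothing, belongs to your orbit, and your strengthened induction cannot even get started there at $E^2$. (One could try to rescue the claim by exploiting that along a genuine dependency chain the page numbers strictly decrease, so late outgoing differentials cannot raise the row by much; but that is exactly the bookkeeping your ``elementary check'' omits, and it is not elementary -- it is the substance of the lemma.)

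The standard repair is to make the induction asymmetric instead of enlarging the region where you demand isomorphisms. From the map of three-term complexes computing $E^{r+1}_{p,q}$ one gets: $f^{r+1}_{p,q}$ is injective provided $f^{r}_{p,q}$ is injective and $f^{r}$ is surjective at the incoming source $(p+r,q-r+1)$; and $f^{r+1}_{p,q}$ is surjective provided $f^{r}_{p,q}$ is surjective and $f^{r}$ is injective at the outgoing target $(p-r,q+r-1)$. Hence the statement ``$f^{r}$ is injective for $p+q\leq l$ and surjective for $p+q\leq l+1$ with $q\leq l$'' propagates from page to page: incoming sources of positions in the triangle have total degree $\leq l+1$ and row $\leq q\leq l$, while outgoing targets of positions with total degree $\leq l+1$ have total degree $\leq l$ (or vanish for quadrant reasons), so each requirement is covered by the other half of the statement. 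At $E^2$ both halves hold by the hypothesis, which is an isomorphism on the whole strip $q\leq l$; therefore $f^{\infty}_{p,q}$ is an isomorphism for $p+q\leq l$, and your filtration-plus-five-lemma step, which is correct as written, finishes the argument. This asymmetric bookkeeping is what the paper's terse sketch (``control higher differentials with targets of total degree $\leq l$'') is alluding to.
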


\begin{proof}
The fact 
that $f$ is an  isomorphism on $E^2_{p,q}$ for all $p$  allows us to control higher differentials
with targets of bidegree $(p,q)$ with $p+q\leq l$. Also note that that $r$-th
differential has bi-degree $(-p, q-1)$. So the differentials from terms with bidegree $(p,q)$ and $p+q \leq l$ have targets with total degree no greater than $l$.
The details are left to the reader. 
\end{proof}

We can improve the situation when we have further constraints.
Consider the following abstract version of the 
situation described in Theorem 2.1.
For $k\geq 1$, let $G$ be a topological group and $C_k$ be 
$G$-spaces with $G$-equivariant 
maps $C_k \to C_{k+1}$ for $k \geq 0$ that are stably $G$-equivariantly 
split injective. 
Consider the  map induced on  Borel constructions
$$
f_k :EG\times_ G C_k \longrightarrow EG\times_G C_{k+1}.
$$
We have the following helpful tool. 

\begin{lemma}
If $C_k \to C_{k+1}$ is  stably $G$-equivariantly 
split injective, then so is 
$f_k $.
In particular $f_k$ is split injective in homology.
\end{lemma}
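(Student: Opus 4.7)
The plan is to deduce the splitting of $f_k$ functorially from the given $G$-equivariant stable splitting by applying the Borel construction. By hypothesis, the map $\sigma \colon C_k \to C_{k+1}$ is $G$-equivariantly stably split injective, meaning that in the $G$-equivariant stable homotopy category the suspension spectrum map $\Sigma^\infty \sigma_+ \colon \Sigma^\infty (C_k)_+ \to \Sigma^\infty (C_{k+1})_+$ admits a $G$-equivariant retraction $r$ with $r \circ \Sigma^\infty \sigma_+ \simeq \id$.

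Next I would invoke the Borel construction $EG_+ \wedge_G (-)$ as a functor from $G$-spectra to (non-equivariant) spectra. Two facts are used: first, for any unbased $G$-space $X$ one has $EG_+ \wedge_G X_+ \cong (EG \times_G X)_+$, so under this identification the Borel construction sends $\Sigma^\infty \sigma_+$ to $\Sigma^\infty (f_k)_+$; and second, the Borel construction preserves identities, compositions and homotopies of stable maps. Applying it to the $G$-equivariant retraction $r$ therefore yields a non-equivariant stable map $\bar r \defeq EG_+ \wedge_G r$ with
$$
\bar r \circ \Sigma^\infty (f_k)_+ \;\simeq\; EG_+ \wedge_G \bigl( r \circ \Sigma^\infty \sigma_+ \bigr) \;\simeq\; \id,
$$
which is exactly the statement that $f_k$ is stably split injective. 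The ``in particular'' assertion then follows at once, because applying ordinary homology to a stable splitting produces a split injection of graded abelian groups $H_*(EG\times_G C_k)\hookrightarrow H_*(EG\times_G C_{k+1})$ in every degree.

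The only substantive point requiring care is to make precise the sense in which the splittings of \cite{MT} cited in Theorem 2.1 live in the $G$-equivariant stable homotopy category, so that the Borel construction on $G$-spectra applies verbatim; this is expected to be immediate from the B\"odigheimer-type construction used there. Given that verification, the lemma is entirely formal, and the real content has already been absorbed into Theorem 2.1.
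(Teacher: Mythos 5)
Your argument is correct, but it takes a more formal route than the paper. You apply the Borel (homotopy orbit) functor $EG_+\wedge_G(-)$ directly to the $G$-equivariant stable retraction $r$ of $\Sigma^\infty\sigma_+$, using that this functor preserves compositions and homotopies and that it carries $\Sigma^\infty X_+$ to $\Sigma^\infty(EG\times_G X)_+$ (the suspension coordinates carry the trivial $G$-action). The paper instead first upgrades the hypothesis to an explicit equivariant stable wedge decomposition $C_k\simeqs V_k=\bigvee_{j\le k}D_j$ with $D_j=C_j/C_{j-1}$ (splitting the cofibre sequences inductively), identifies the stabilisation maps with wedge inclusions $V_k\to V_{k+1}$, and then uses $EG\times_G V_k\simeq\bigvee_j EG\ltimes_G D_j$ to see the induced maps on Borel constructions are again inclusions of wedge summands. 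Your version is shorter and makes transparent that the lemma is purely formal once a retraction exists in a homotopy category of spectra with $G$-action; the paper's version buys extra information, namely an explicit stable splitting of the Borel constructions into half-smash summands $EG\ltimes_G D_j$ (in the spirit of the B\"odigheimer--Snaith splittings), not just abstract split injectivity of each $f_k$. The one point you rightly flag is the only one needing care: since $G$ is here a large topological group such as $\diff(W)$, the phrase ``stably $G$-equivariantly split injective'' must be read in a setting (naive $G$-spectra, or equivariant maps after suitable suspension) where the Borel construction is homotopy-functorial; this is indeed the setting of \cite{MT}, and with that reading your proof goes through, including the homological consequence by applying $H_*$ to the stable retraction.
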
 

\begin{proof}
The assumptions on the $C_k$ imply that it is $G$-equivariantly
stably homotopy equivalent to 
$V_k = \bigvee_{j=0} ^k D_j$ with $D_j = C_j/C_{j-1}$. 
The maps $C_k \to C_{k+1} $ correspond 
$G$-equivariantly to
the split injective maps $V_k \to V_{k+1}$. These  also induce split injective 
maps on the Borel constructions as
$$
EG\times_G V_k \simeq \bigvee _{j=0} ^k   EG \ltimes _G D_j
$$
where $X\ltimes Y $ denotes the half-smash $X\times Y/ X \times *$.
\end{proof}

%\subsection{ Stable splittings in families}

%%%%%%%%%%%%%%%%%%%%%%%%%%%%%%%%%%%%%%%%%%%%%%%%%%%%%%%%%%%%%%%%%%%%%%%%%%%%

\section{Proofs of the main theorems}

We will first define explicit stabilisation maps and then prove  the theorems in sequence using the results of the previous section.

\subsection {Definition of stabilisation maps }

We will construct maps of the diffeomorphisms groups that will induce the 
maps claimed in the three theorems of the introduction. They correspond on the underlying manifolds to connected sum but more precisely will be 
constructed by extending diffeomorphisms from $W$ to $W^+$ as encountered in section 2.1.

As before, let $\partial _0$ be an embedded $(d-1)$-dimensional disk in the boundary $\partial W$ and $Q$ be another $d$-dimensional manifold. 
Let $D$ be a parametrised closed disk in $Q$ and $D_0$ be a closed disk with centre $z_0$ contained in 
$\partial _0 \times [0,1] \subset \partial W \times [0,1]$. Remove the two disks and  
identify their boundary to form a new manifold $Q'$. 
To form the connected sum we take the union of $W$ and $Q'$ and identify $\partial W$ in $W$ with 
$\partial W \times \{0 \}$. Note that the resulting manifold $W\sharp Q$ is indeed
diffeomorphic to the usual connected sum of $W$ and $Q$.  It comes equipped with the embedded disk 
$\partial _0^+ = \partial_0 \times \{ 1 \}$ in its boundary, and we may thus repeat the process to 
form the connected sum $W\sharp _k Q$ as illustrated in Figure 3.1. 
 
\vskip .3in
\begin{center}
\includegraphics [height =1.5in] {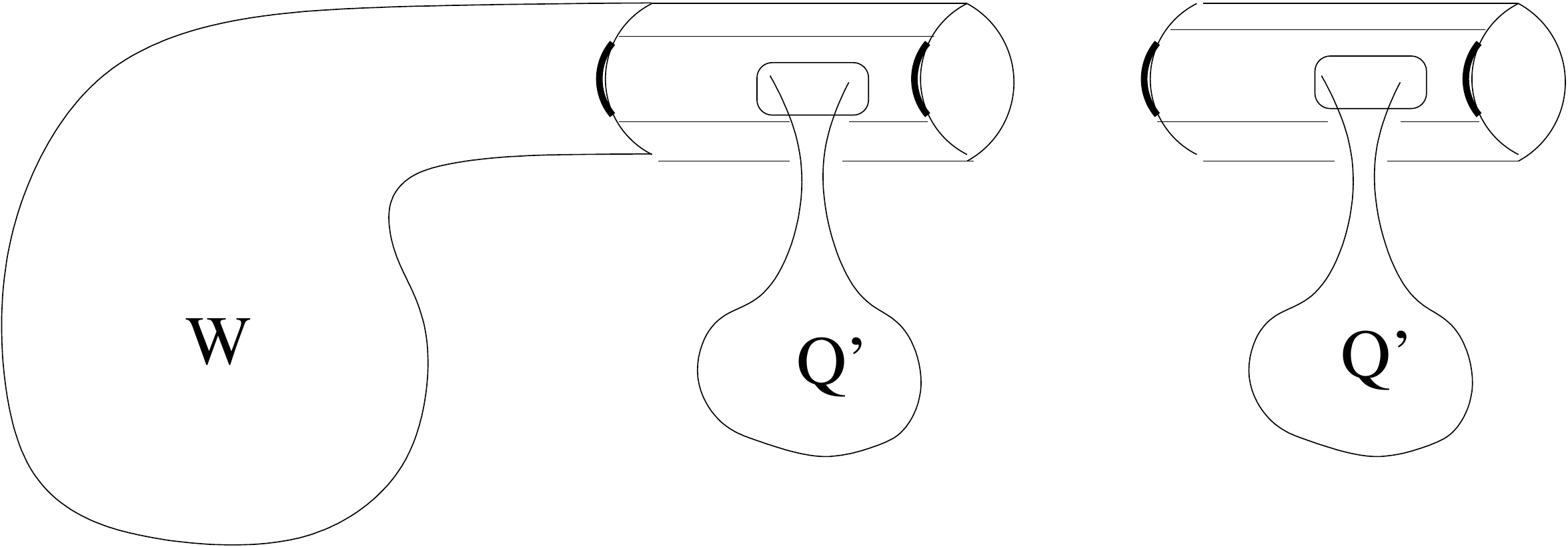}
%\caption{Stabilising by taking connected sums with copies of $Q$}
\end{center}

\hfil {\it Figure 3.1: 
The connected sum of $W$ with  copies of $Q$.}
\hfil 

The $k$-fold connected sum
$W\sharp _kQ$ contains $k$ copies of $Q_1 = Q\setminus \oD$. 
The group $\Sigma _G\diff (W\sharp _kQ)$ of the introduction consists of the elements in the group $\diff(W\sharp _kQ; \coprod_k Q_1; \partial_0)$ of 
diffeomorphisms that fix $\partial_0$ point-wise, map $\coprod_k Q_1$ to itself and restrict on each boundary component to an element in $G \subset
O(d)$.  
Let $G \subset  \SO (d)$ when $W$ is orientable, or  
$G \subset O(d)$ when not.
When $G$ is the trivial group $e$, this group is simply the wreath product 
$$
\diff _k (W) \wr \diff_1 (Q) = \diff_k(W) \ltimes (\diff_1(Q) )^k.
$$
The diffeomorphisms  of $W\sharp _kQ$ that fix $\partial _0$ point-wise can 
be extended to diffeomorphisms of $W\sharp_{k+1} Q$ using the identity on 
the additional  copy of $Q_1$. We have thus well-defined
homomorphisms 
$
\sigma: \diff (W\sharp_k Q) \to \diff (W\sharp _{k+1} Q)
$
that restrict to homomorphisms
$$
\sigma: \Sigma_G \diff (W\sharp_kQ) 
\longrightarrow \Sigma _G\diff (W\sharp_{k+1} Q).
$$

Note that when $Q_1 = D_0 \setminus z_0$ is a disk with a puncture,
$W\sharp _k Q \simeq W^k$, and  when $Q_1 = D_0 \setminus \oD $ is a disk without its interior, 
$W\sharp _k Q \simeq W _k$.
By abuse of notation we will suppress this diffeomorphism and write
$$
\sigma: \diff^k (W) \longrightarrow \diff^{k+1} (W) \quad
\text { and } \quad \sigma: \diff _k (W) \longrightarrow \diff_{k+1} (W)
$$ 
for the resulting stabilisation maps.
The iterated stabilisation maps $\sigma ^k : \diff (W) \to \diff^k (W)$ and $\sigma ^k : \diff (W) \to \diff_k (W)$
have homotopy left inverses.
To define these, identify $\diff ^k(W)$ and $\diff _k(W)$ with diffeomorphisms that fix a set of $k$ points respectively of $k$ parametrised disks. 
The left inverse is given by the forgetful maps:
\begin{equation*}
\tag{3.1}
\diff^k(W) \longrightarrow \diff(W) \quad \text{ and } \quad  \diff_k(W) \longrightarrow \diff (W).
\end{equation*}

We note here that by Remark 2.6,  Theorem 1.1 and Theorem 1.2 are {\it not } special 
instances of Theorem 1.3.
In particular, $\Sigma_e \diff (W\sharp_k D^d)$ is 
homotopy equivalent 
to $\diff_k (W)$ when $d=2$ or $3$ but not in general. 
Nevertheless, all three are special cases of Theorem 3.4 below.

We will prove the three theorems of the introduction  in sequential order in the next sections.

%To compare these groups for different $k$
% we replace $W^k$ by the 
%repeated connect sum $W\sharp_k \pD$ of $W$ with the once punctured disk
%$\pD$ along $\partial _0$ as indicated in Figure 1 below. Note that $\pD$
%comes equipped with two marked disks on its boundary, one along which
%to glue and one which will be $\partial_0$ of the resulting manifold.    
%Diffeomorphisms of
%$W\sharp_k \pD$ that fix a neighbourhood of $\partial_0$ point-wise 
%can be extended to  $W\sharp_{k+1} \pD$
%by the identity diffeomorphism on the additional copy of $\pD$.   
%Note that $(W^k; \partial _0)$ is diffeomorphic to $(W\sharp k \pD; 
%\partial _0)$, and the associated diffeomorphisms groups are therefore
%isomorphic.

\subsection {Proof of Theorem 1.1}

We will prove the following slightly stronger statement in which the split injection of homology groups is promoted to a stable (in the sense of stable homotopy theory) split injection of spaces.

\begin{theorem}
The stabilisation map $\sigma: B\diff^k(W) \to B\diff ^{k+1} (W) $ is stably split injective and a homology isomorphism
in degrees $* \leq k/2$.
\end{theorem}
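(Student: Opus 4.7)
The plan is to identify $B\diff^k(W)$ with a Borel construction of $\diff(W)$ acting on the configuration space $C_k(W)$, and then translate the already-established stability properties of configuration spaces (Theorems 2.1 and 2.2) across this identification via Lemmas 2.7 and 2.8.

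First I would observe that $\diff(W)$ acts on the unordered configuration space $C_k(W)$ by $\psi\cdot\{w_1,\dots,w_k\}=\{\psi(w_1),\dots,\psi(w_k)\}$. Lemma 2.3(i) asserts that this action is transitive, and the stabiliser of the basepoint configuration is precisely $\diff^k(W)$. An application of the Palais bundle result (2.2), with $K$ a small neighbourhood of the $k$ marked points, shows that the orbit map $\diff(W)\to C_k(W)$ is a locally trivial principal-type $\diff^k(W)$-bundle. Consequently
$$B\diff^k(W)\simeq E\diff(W)\times_{\diff(W)}C_k(W),$$
and after the identification $W^+\simeq W$ from section 2.1 the stabilisation map of the theorem becomes precisely the map of Borel constructions induced by $\sigma\colon C_k(W)\to C_{k+1}(W)$.

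For the stable split injection I would invoke Theorem 2.1, which says that $\sigma$ is $\diff(W)$-equivariantly stably split injective, together with Lemma 2.8, which promotes an equivariant stable split injection to a stable split injection on Borel constructions. This immediately yields the stable splitting of spaces, and in particular a split injection on homology in every degree. For the isomorphism range I would compare the two Serre spectral sequences
$$E^2_{p,q}=H_p\bigl(B\diff(W);\mathcal{H}_q(C_k(W))\bigr)\Longrightarrow H_{p+q}(B\diff^k(W))$$
and its analogue for $k+1$, where $\mathcal{H}_q$ denotes the local coefficient system determined by the $\diff(W)$-action on $H_q$ of the fibre. Equivariance of $\sigma$ ensures that the map of $E^2$-pages is induced by $\sigma_*\colon H_q(C_k(W))\to H_q(C_{k+1}(W))$, which by Theorem 2.2 is an isomorphism of coefficient systems for $q\leq k/2$. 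Hence the map of $E^2$-terms is an isomorphism for $q\leq k/2$ and all $p$, and Lemma 2.7 with $l=k/2$ delivers the homology isomorphism on abutments in degrees $*\leq k/2$.

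The main obstacle I anticipate is the verification that the group-theoretic stabilisation $\diff^k(W)\hookrightarrow\diff^{k+1}(W)$ (extension by the identity on the added collar together with one additional marked point therein) corresponds, under the Borel-construction identification, to the configuration-space stabilisation $\sigma$ of section 2.1. This requires carefully tracking the collar extension and the homeomorphism $W^+\cong W$, together with a check that the resulting action and stabilisation are $\diff(W)$-equivariant in the sense demanded by Lemma 2.8. Once this is pinned down, the remainder of the argument is mechanical given the preliminary results of section 2.
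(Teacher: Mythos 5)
Your proposal is correct and follows essentially the same route as the paper: the paper also realises $B\diff^k(W)\simeq E\diff(W)\times_{\diff(W)}C_k(W)$ via the evaluation fibre bundle $\diff^k(W)\to\diff(W)\to C_k(W)$ (surjective by transitivity, a bundle by Palais), compares the Serre spectral sequences of $C_k(W)\to B\diff^k(W)\to B\diff(W)$ using Theorem 2.2 and Lemma 2.7 for the stability range, and obtains the stable splitting from Theorem 2.1 together with Lemma 2.8. The compatibility of the group-level stabilisation with the configuration-space stabilisation, which you flag as the remaining check, is handled in the paper by the geometric model of embedded manifolds with marked points and the map of fibrations over the homotopy equivalence $B\diff(W^+)\simeq B\diff(W)$.
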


\begin{proof}
Fix $k$ points $x_1, \dots , x_k$ in the interior of $W$ and consider the
evaluation map
$$
E_k: \diff (W) \longrightarrow C_k (W)
$$
which maps a diffeomorphism $\psi$ to $\psi (x_1), \dots, \psi (x_k)$.
Note  that  this map is surjective as by Lemma 2.4 (i) the diffeomorphism group of $W$ is $k$-transitive. 
It is a fibre bundle with fibre $\diff^k (W)$ by \cite {Palais}; see (2.2).
Extending this fibration twice to the right we get a new fibration
\begin{equation}
\tag {3.2}
C_k (W) \longrightarrow B\diff^k(W) \longrightarrow B\diff (W).
\end{equation}
This fibration can be constructed geometrically as follows.   
Fix an embedding $\partial_0 \to \{ 0\} \times \mathbb R^\infty$ and 
consider the space of smooth embeddings 
$$
E\diff (W; ) := \emb ^{\partial_0}( W ;
(\infty , 0] \times \mathbb R^\infty)
$$ 
extending it and which furthermore are of the form $1 \times h|_{\partial W}$ near 
$(-\epsilon, 0] \times \mathbb  R^\infty$. This is a contractible 
space by the Whitney embedding theorem and the diffeomorphism group
$\diff (W ) $ acts freely on it. The orbit space, 
the space of embedded manifolds $W'$ diffeomorphic to $W$, is a model for 
$B\diff (W)$. Similarly, the orbit space for the
restricted action of $\diff ^k(W)$, 
the space of embedded manifolds $W'$ with $k$ marked points,  gives a model for $B\diff^k (W)$. With 
these models the right hand map in the above fibration (3.2) is the map that 
forgets the marked points on the embedded manifold  and 
the fibre over  $W'$ is its $k$-fold configuration space. 

The fibration is compatible with stabilisation in the sense that 
we have maps of fibrations
\begin{equation*}
\xymatrix{
C_k (W) \ar[r] \ar[d]^\sigma & B\diff^k(W) \ar[r] \ar[d]^\sigma & B\diff (W)	\ar[d]^\sigma\\
C_{k+1} (W) \ar[r] &B\diff^{k+1}(W) \ar[r] & B\diff (W)
}
\end{equation*}
with a homotopy equivalence of the base spaces.
This gives rise to a map of spectral sequences
\begin{equation*}
\xymatrix{
E^2_{**} = H_*( B\diff (W); H_* ( C_k(W))) \ar[d]^\sigma &\Longrightarrow  
& H_*(B\diff^k (W))\ar[d]^\sigma \\
E^2_{**} = H_*( B\diff (W); H_* ( C_{k+1}(W))) &\Longrightarrow & H_*(B\diff^{k+1} (W)) \\
}
\end{equation*}
By Theorem 2.2, the map of $E^2_{pq}$-terms is an isomorphism
for $q\leq k/2$ and all $p$. Hence, by Lemma 2.7, it is also an isomorphism of the total spaces in 
degrees $* \leq k/2$ and the homological statement Theorem 1.1 follows.

Our geometric model for the fibration (3.2) identifies 
$$
B\diff ^k (W) \simeq E\diff (W) \times _{\diff (W)} 
C_k (W).
$$ 
Thus  Theorem 2.1 and Lemma 2.8 imply furthermore that the 
stabilisation map $\sigma$ is stably split injective.
\end{proof}

\subsection {Proof of Theorem 1.2}

We will prove the following slightly stronger statement.

\begin{theorem}
The stabilisation map $\sigma: B\diff_k(W) \to B\diff _{k+1} (W) $ is stably split injective and a homology isomorphism
in degrees $* \leq k/2$.
\end{theorem}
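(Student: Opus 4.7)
The plan is to mirror the proof of Theorem 3.2, replacing marked points by \emph{framed} points, i.e.\ labels in the frame bundle. Let $\Fr \to W$ denote the oriented frame bundle $\Fr^+$ if $W$ is oriented and the full frame bundle otherwise. I would first fix the centres $x_i = \phi_i(0)$ of the parametrised disks together with the frames $f_i = D_0 \phi_i$, and consider the evaluation map
$$
E_k : \diff(W) \longrightarrow C_k(W;\Fr), \qquad \psi \longmapsto \bigl[\{(\psi(x_i),\, D_{x_i}\psi \cdot f_i)\}_{i=1}^{k}\bigr].
$$
By Lemma 2.4(ii) this map is surjective onto the component of the basepoint configuration. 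Applying the Palais bundle (2.2) with $K = \coprod_k D^d$, together with the homotopy equivalences (2.3), (2.4) (which replace a parametrised embedded disk by its frame at the centre), one sees that $E_k$ is a fibre bundle whose fibre is homotopy equivalent to $\diff_k(W)$.

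Extending twice to the right yields the fibration
$$
C_k(W;\Fr) \longrightarrow B\diff_k(W) \longrightarrow B\diff(W),
$$
compatible with stabilisation, where a new parametrised disk is added in the collar of $\partial_0$. By Theorem 2.2 applied to $\Fr \to W$, the map $C_k(W;\Fr) \to C_{k+1}(W;\Fr)$ is a homology isomorphism in degrees $* \leq k/2$. The Serre spectral sequence argument of Lemma 2.7, run exactly as in the proof of Theorem 3.2, then yields that $\sigma : B\diff_k(W) \to B\diff_{k+1}(W)$ is a homology isomorphism in degrees $* \leq k/2$.

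For the stable splitting, Theorem 2.1 ensures that $C_k(W;\Fr) \to C_{k+1}(W;\Fr)$ is $\diff(W;\Fr)$-equivariantly stably split injective. Since $\diff(W)$ acts on $\Fr$ via derivatives, it embeds in $\diff(W;\Fr)$ and the splitting is already $\diff(W)$-equivariant. Using the geometric model $B\diff_k(W) \simeq E\diff(W) \times_{\diff(W)} C_k(W;\Fr)$ (from the fibration above), Lemma 2.8 then gives that $\sigma$ itself is stably split injective.

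The step requiring the most care is the identification of the fibre of $B\diff_k(W) \to B\diff(W)$ with $C_k(W;\Fr)$; this is where one must combine Palais' bundle theorem with the iterated version of the homotopy equivalence in Lemma 2.4 (or the more directly applicable Lemma 2.5 with $G = \{e\}$) to pass from ``fixing $k$ parametrised disks'' to ``fixing $k$ framed points''. A further minor point is that in the non-orientable case the fibres of $\Fr \to W$ are disconnected; however, the sources \cite{CPalmer} and \cite{KupersMiller} cited for Theorem 2.2 contain a sufficiently general version to cover this situation.
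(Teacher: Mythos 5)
Your proposal follows essentially the same route as the paper's proof of Theorem 3.2: the evaluation map recording the images of the marked points together with the derivatives, giving the fibration $C_k(W;\Fr)\to B\diff_k(W)\to B\diff(W)$ with the fibre identified via (an iterated version of) Lemma 2.4, then Theorem 2.2 with Lemma 2.7 for the homology isomorphism and Theorem 2.1 with Lemma 2.8 applied to the Borel-construction model for the stable splitting. The points you flag (identification of the fibre, equivariance of the action on $\Fr$, the non-orientable case handled by the full frame bundle) are exactly the ones the paper addresses, so the argument is correct and no further comparison is needed.
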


\begin{proof}
As in the proof of Theorem 3.1, we fix $k$ points $ \bold x = (x_1, \dots , x_k ) $ 
in the interior of $W$. We modify the evaluation map to also 
record  the map of tangent spaces at these points. Thus a diffeomorphism $\psi$ is mapped to 
$(\psi (x_1), \dots, \psi(x_k); D_{x_1}\psi, \dots , D_{x_k} \psi)$ with each $D_{x_i}\psi \in 
\Iso (T_{x_i}W, T_{\psi(x_i)}W)$. After fixing an isomorphism $T_{x_i} W \simeq \mathbb R^d$ for each $i$
this defines a map 
$$
E_k^T: \diff (W) \longrightarrow C_k (W; \Fr),
$$
to the twisted configuration space 
where $\Fr : \Iso ( \mathbb R^d, -) \to W$ is the frame bundle of the tangent  bundle of $W$ with 
fibre $\Iso ( \mathbb R^d; T_w W)$ over $w$.  
It is understood here that in the case when $W$ is oriented, 
the linear isomorphisms preserve the orientation and the fibre is isomorphic to the group of orientation preserving invertible matrices
$\GL_d^+ (\mathbb R)$. Otherwise it is isomorphic to the whole linear group $\GL_d(\mathbb R)$.
By Lemma 2.3.(ii) the evaluation map  $E^T_k$ is surjective and its   
fibre  over $\bold x$ and the identity linear transformations 
is
$$
\diff ( W; \{ x_1, \dots , x_k , T_{x_1} W , \dots , T_{x_k} W \} ;  \partial _0 ),
$$
the group of diffeomorphisms that fix $\partial _0$ point-wise, 
fix the points  $\{ x_1, \dots , x_k \}$ as a set and their
tangent spaces point-wise up to permutation. 
By an obvious extension of Lemma 2.4, this group is homotopic to 
$\diff _k (W)$. 

The remainder of the proof follows now the pattern of the proof of Theorem 1.1.
We consider the associated 
fibre bundle
\begin{equation}
\tag {3.3}
C_k (W; \Fr) \longrightarrow B\diff_k( W) \longrightarrow B\diff (W).
\end{equation}
The stabilisation maps commute with all maps in the fibre sequence 
and hence induce a map of 
associated Serre spectral sequences with $E^2$-term 
$$
E^2_{**} = H_* ( B\diff (W); H_* (C_k (W;\Fr))).
$$ 
By Theorem 2.2 with twisted labels  in the frame bundle $\Fr$, 
the stabilisation map induces an isomorphism on  homology groups 
$H_* (C_k (W; \Fr))$ in degrees
$* \leq k/2$, thus on the $E^2$-term of the Serre spectral sequence and the abutment by Lemma 2.7, which proves the homological statement Theorem 1.2.

A geometric model for the total space is given by the space of 
embedded manifolds in infinite Euclidean space
with $k$ marked points and framings for their tangent spaces, that is
$$
B\diff_k(W) \simeq \emb^{\partial_0} (W; (\infty, 0] \times \mathbb R^\infty))
\times _{\diff (W)} C_k ( W; \Fr).
$$
Recall that  it is understood that if $W$ is oriented then
the framings are compatible with the orientation.
We note here that the action of the diffeomorphism group on the label 
space is non-trivial.
The full strength of Theorem 2.1  and Lemma 2.8 imply the stable splitting.
\end{proof}

\subsection {Proof of Theorem 1.3}

To prove Theorem 1.3 we will consider two fibration sequences. Recall from the
previous section  that 
the evaluation map gives rise to  a fibration (up to homotopy)
\begin{equation*}
\diff _k(W) \longrightarrow \diff (W) \overset
{E_k^T}  \longrightarrow  C_k (W; \Fr)   
.
\end{equation*}
%(Indeed this is a fibre sequence if we replace the fibre by the homotopy %equivalent
%group 
%of diffeomorphisms which fix $k$ points and their tangent spaces.)
The second fibration is
\begin{equation*}
P\diff _k(W) \longrightarrow  {P_G\diff}  (W_k) 
\longrightarrow G ^k
\end{equation*}
where the right map sends a diffeomorphism to its restriction to the boundary spheres; $P\diff_k(W) $ and $P_G\diff (W_k) $ denote the subgroups of $\diff_k(W) $ and $\Sigma_G\diff (W_k) $  that
do not permute the boundary spheres. 
%{\it We now restrict $G$ to be a subgroup of $O(d)$ (which automatically %becomes  $SO(d)$ when $W$ is orientable).}
Extending the fibrations twice to the right and dividing out by the (free) symmetric group action on base and total space, we can combine them into the following 
diagram where each column and row is a fibration (up to homotopy):
\begin{equation*}
\xymatrix{
G^k 	\ar[r] \ar[d]^= &C_k (W; \Fr) \ar[r] \ar[d] 	
	& C_k (W; \Fr /G) \ar[d]	\\
G^k	\ar[r] \ar[d] &B\diff _k (W) \ar[r] \ar[d]
	&B{\Sigma_G \diff}(W _k) \ar [d] \\
\ast	\ar[r]  &B\diff (W)  \ar[r] ^=  
	&B\diff(W).
}
\end{equation*}
Here the bundle $\Fr/G$ over $W$  is the fibrewise quotient space of 
the frame bundle $\Fr$
by the natural action of $G$. 
Note that the right column 
is associated to the evaluation fibration
$$
E^G_k: \diff(W) \longrightarrow C_k (W; \Fr /G)
$$
which is $E^T_k$ from  (3.3) composed with the quotient map.
As in the proof of Theorem 3.2, we can now apply Theorem 2.2 and Lemma 2.7 as well as Theorem 2.1 and Lemma 2.8
to the 
fibration of the right column to prove the following proposition which generalises 
both Theorems 3.1 and 3.2.

\begin{theorem}
For $G \subset O(d)$ a closed subgroup, the map
$\sigma: B{\Sigma_G \diff}(W _k)\to B\Sigma_G \diff(W_{k+1})$ is a stably
split injection and  induces an isomorphism in homology of degrees $* \leq k/2$.
\qed
\end{theorem}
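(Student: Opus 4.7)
The plan is to repeat verbatim the argument used for Theorems 3.1 and 3.2, applied this time to the right-hand column of the three-by-three diagram displayed immediately above the theorem, namely
\begin{equation*}
C_k(W;\Fr/G)\longrightarrow B\Sigma_G\diff(W_k)\longrightarrow B\diff(W).
\end{equation*}
This sequence is (up to homotopy) the fibration obtained by extending the evaluation map $E_k^G:\diff(W)\to C_k(W;\Fr/G)$ twice to the right; the map $E_k^G$ is the composition of $E_k^T$ from the proof of Theorem 3.2 with the fibrewise quotient $\Fr\to\Fr/G$ by the free $G$-action. First I would verify that $E_k^G$ is surjective (using Lemma 2.3(ii) at each marked point) and a locally trivial fibre bundle (by (2.2)), and identify its fibre, up to homotopy, with $\Sigma_G\diff(W_k)$. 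The latter uses Lemma 2.5 to convert the condition of fixing a frame modulo $G$ at each $w_i$ into the condition of restricting to $G$ on the boundary sphere of a small embedded disk around $w_i$, together with the wreath-product description of $\Sigma_G\diff(W_k)$ to permit permutation of the $k$ marked points.

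Second, since the stabilisation extends a diffeomorphism by the identity on an additional copy of $Q_1$ and adds the chosen point $(w_0,[x_0])$ to a configuration, it induces a morphism of fibrations over the same base $B\diff(W)$:
\begin{equation*}
\xymatrix{
C_k(W;\Fr/G) \ar[d]^\sigma \ar[r] & B\Sigma_G\diff(W_k) \ar[d]^\sigma \ar[r] & B\diff(W) \ar[d]^= \\
C_{k+1}(W;\Fr/G) \ar[r] & B\Sigma_G\diff(W_{k+1}) \ar[r] & B\diff(W).
}
\end{equation*}
Applying Theorem 2.2 to the bundle $\Fr/G\to W$ shows the left vertical arrow is an isomorphism on homology in degrees $*\leq k/2$, so the induced map of Serre spectral sequences is an isomorphism on $E^2_{p,q}=H_p(B\diff(W);H_q(C_k(W;\Fr/G)))$ for all $p$ and $q\leq k/2$. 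Lemma 2.7 then upgrades this to an isomorphism on abutments in degrees $*\leq k/2$, which is the homological statement.

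For the stable split injection I would use the geometric model
\begin{equation*}
B\Sigma_G\diff(W_k)\simeq E\diff(W)\times_{\diff(W)}C_k(W;\Fr/G)
\end{equation*}
entirely analogous to the one constructed for $B\diff^k(W)$ in the proof of Theorem 3.1. Theorem 2.1 applied to $\Fr/G$ gives a $\diff(W;\Fr/G)$-equivariant (and in particular $\diff(W)$-equivariant) stable split injection $C_k(W;\Fr/G)\hookrightarrow C_{k+1}(W;\Fr/G)$, and Lemma 2.8 then promotes this to a stable split injection of the associated Borel constructions, yielding the stable splitting of $\sigma$.

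The principal obstacle is the first step: one has to assemble Lemma 2.5 (applied simultaneously at $k$ marked points), the wreath-product presentation of $\Sigma_G\diff(W_k)$, and the passage from $\Fr$ to $\Fr/G$ to identify the right-hand column honestly as the twice-extended evaluation fibration, and to check that this identification is compatible with the stabilisation $\sigma$ on both total space and fibre. Once this bookkeeping is complete, the rest of the argument is merely a rerun of the spectral-sequence and Borel-construction steps already developed in Sections 3.2 and 3.3.
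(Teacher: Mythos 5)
Your proposal is correct and follows essentially the same route as the paper: the paper also reduces Theorem 3.3 to the fibration $C_k(W;\Fr/G)\to B\Sigma_G\diff(W_k)\to B\diff(W)$ associated to the composite evaluation map $E_k^G$, and then applies Theorem 2.2 with Lemma 2.7 for the homology isomorphism and Theorem 2.1 with Lemma 2.8 (via the Borel-construction model) for the stable splitting. The only cosmetic difference is in the bookkeeping you flag as the main obstacle: the paper justifies the identification of that fibration by combining the evaluation fibration for $\Fr$ with the restriction fibration $P\diff_k(W)\to P_G\diff(W_k)\to G^k$ into a three-by-three diagram and quotienting by $\Sigma_k$, whereas you identify the fibre of $E_k^G$ directly via a $k$-point, permutation-allowing extension of Lemma 2.5 --- both amount to the same identification.
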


We will now \lq decorate' the diffeomorphism group of $W_k$ at the embedded
parametrised disks with a label group $H$ which 
comes equipped with a continuous surjective homomorphism $\rho: H\to G$.  
Define
$\Sigma ^H_G \diff _k(W)$ to be the pullback under the induced map on wreath products and
the natural restriction map $\Sigma _G \diff _k (W) \to \Sigma _k \wr G$: 
\begin{equation*}
\xymatrix{
\Sigma ^H_G \diff _k(W) \ar[d] \ar[r] &\Sigma _G \diff _k (W) \ar[d]\\
\Sigma_k\wr H \ar[r] ^\rho  &\Sigma_k \wr G.
}
\end{equation*}
%For $H= \Sigma _G \diff (Q_1)$ and $\rho$ the restriction to the boundary map this is the group
%$\Sigma_G \diff (W\sharp_k Q)$ defined in the introduction.
Write $H_1$ for the kernel of $\rho$.
Then there is a  fibration sequence
$$
BH_1 \longrightarrow O(d) \times_G BH_1 \longrightarrow O(d)/G
$$
where we take $BH_1 = EH/H_1$ so that it has a free $G$ action.
Combining this with the diagram above we have the following (up to homotopy)
commutative diagram with fibrations in each column and row
%The fibration 
%$
%\diff _1 (Q) \longrightarrow \diff _G (Q_1) \longrightarrow G
%$
%gives rise to a fibration
%$$
%^B\diff_1 (Q) \longrightarrow O(d) \times_G B\diff _1 Q \longrightarrow %O(d) /G
%$$
%where we take $B\diff _1(Q) = E\diff _G (Q_1)/\diff _1(Q)$ 
%which has a free $G$ action. 
%Here $\diff _G(Q_1) $ denotes the group of diffeomorphisms of $Q_1$ which %restrict to an 
%element of $G$ on the boundary sphere.
%Combining this with the diagram above we have the following commutative 
%diagram of fibrations in each column and row
\begin{equation*}
\tag{3.4}
\xymatrix{
BH_1^k     \ar[r] \ar[d]^= &C_k (W; \Fr \times _G BH_1 ) \ar[r] \ar[d]     
        & C_k (W; \Fr /G) \ar[d]        \\
BH_1^k     \ar[r] \ar[d] &B\Sigma_G^H \diff_k (W ) \ar[r] \ar[d]
        &B{\Sigma_G \diff}(W _k) \ar [d] \\
\ast    \ar[r]  &B\diff (W)  \ar[r] ^=  
        &B\diff(W).
}
\end{equation*}
Thus we have a description of $B{\Sigma_G^H\diff}_k  (W ) $ 
as a fibre bundle over $B\diff (W)$ with fibre the configuration space 
$ C_k (W; \Fr \times_G BH_1)$. As in the proof of Theorem 3.2, an 
application of Theorem 2.2 and 
Lemma 2.7 and  
invoking the full strength of Theorem 2.1 and Lemma 2.8 gives the following most general statement.

\begin{theorem}
The stabilisation map $\sigma: B\Sigma_G^H \diff_k (W) 
\to B\Sigma_G ^H\diff_{k+1} (W) $ is a stably split injection and an isomorphism in homology of
degrees $*\leq k/2$.
\qed
\end{theorem}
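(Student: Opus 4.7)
The plan is to imitate the proofs of Theorems 3.1, 3.2 and 3.3 nearly word for word, using now the rightmost column of diagram (3.4) in place of the evaluation fibrations (3.2) and (3.3). First I would read off from (3.4) the Serre fibration (up to homotopy)
$$
C_k(W; \Fr \times_G BH_1) \longrightarrow B\Sigma_G^H \diff_k(W) \longrightarrow B\diff(W)
$$
and identify the total space with the Borel construction
$$
B\Sigma_G^H\diff_k(W) \simeq E\diff(W) \times_{\diff(W)} C_k(W; \pi),
$$
where $\pi \defeq \Fr \times_G BH_1 \to W$ is the twisted label bundle with fibre $\GL_d(\mathbb R)\times_G BH_1$ (or its oriented analogue when $W$ is orientable). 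Under the pullback definition of $\Sigma_G^H\diff_k(W)$ the stabilisation map $\sigma$ of the theorem corresponds on fibres to the standard configuration-space stabilisation of Section 2.1, and it is $\diff(W;\pi)$-equivariant.

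With this identification in hand, the homology-isomorphism statement would follow from a direct application of Theorem 2.2 together with Lemma 2.7. The stabilisation induces a map of Serre spectral sequences
$$
E^2_{p,q} = H_p(B\diff(W); H_q(C_k(W;\pi))) \Longrightarrow H_{p+q}(B\Sigma_G^H\diff_k(W)),
$$
and Theorem 2.2 gives an isomorphism on $E^2_{p,q}$ for $q \leq k/2$ and all $p$, which then upgrades to an isomorphism on the abutments in degrees $* \leq k/2$ by Lemma 2.7.

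For the stable split injection, I would invoke Theorem 2.1, which provides a $\diff(W;\pi)$-equivariant stable splitting of the fibrewise stabilisation. Restricting the equivariance to the subgroup $\diff(W) \subset \diff(W;\pi)$ and feeding the result into Lemma 2.8 (with the ambient topological group there taken to be $\diff(W)$) produces a stable split injection on the Borel construction, which is $B\Sigma_G^H\diff_k(W)$.

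The main obstacle I anticipate lies not in the spectral-sequence or splitting machinery, which is essentially automatic at this stage, but in verifying carefully that the right-hand column of (3.4) really is a fibration with the advertised fibre $C_k(W;\pi)$ and a compatible $\diff(W)$-action, and that the fibres of $\pi$ are path-connected (or otherwise meet the hypotheses of Theorems 2.1 and 2.2) uniformly in the orientable and non-orientable cases. Once that bookkeeping is done, the argument is formally identical to that of Theorem 3.3.
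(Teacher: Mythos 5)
Your proposal is correct and is essentially the paper's own argument: Theorem 3.4 is proved there by reading the middle column of diagram (3.4) as a fibration $C_k(W;\Fr\times_G BH_1)\to B\Sigma_G^H\diff_k(W)\to B\diff(W)$ over $B\diff(W)$, applying Theorem 2.2 together with Lemma 2.7 to the resulting map of Serre spectral sequences, and invoking the full (equivariant) strength of Theorem 2.1 together with Lemma 2.8 on the Borel-construction description to get the stable splitting. The only slip is terminological: the fibration you actually use is the \emph{middle} column of (3.4), not the rightmost one (that column, with fibre $C_k(W;\Fr/G)$, is the one underlying Theorem 3.3).
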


\vskip .2in
It is now easy to see that our previous results are special cases 
of this.
Let $H= \diff _G (Q_1)$ be the group of diffeomorphisms of $Q_1$ that restrict to an element of $G$  
on the spherical boundary component created by deleting a disk from $Q$. Then 
$$
\Sigma _G ^H \diff _k(W) = \Sigma _G \diff (W \sharp_k Q).
$$
Thus Theorem 3.4 specialises to Theorem 1.3.
When $H=\{ e\} = G$ is the trivial group, then
$$
\Sigma _G ^H \diff _k (W) = \diff _k (W)
$$
and Theorem 3.4 gives Theorems 1.2 and 3.2.
To recover Theorems 1.1 and 3.1 put $H=G= O(n)$ respectively $H=G=SO(n)$
in the orientable case with $\rho$ the identity homomorphisms. 
Then, by Lemma 2.5, we have a homotopy equivalence
$$
\Sigma _G^H\diff_k (W) \simeq \diff^k (W).
$$

%%%%%%%%%%%%%%%%%%%%%%%%%%%%%%%%%%%%%%%%%%%%%%%%%%%%%%%%%%%%%%%%%%%%%%%%%%%%

\section{Fundamental groups of configuration spaces }

In order to prove the mapping class group analogues of our main results  we will need the analogues of Theorem 2.1 and 2.2.
The role of the configuration spaces will be taken by  their fundamental groups which we will now compute first.

\begin{lemma}
Assume $\dim ( W) \geq 3$ and 
let $\pi: E \to W$ be a fibre bundle on $W$ with path-connected fibre $F$. Then
$$
\pi _1 (C_k( W; \pi) )= \Sigma_k \wr \pi_1 (E) .
$$
\end{lemma}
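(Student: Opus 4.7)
My plan is to exploit the regular $\Sigma_k$-cover $\widetilde C_k(W;\pi) \to C_k(W;\pi)$, which produces a short exact sequence
\[
1 \longrightarrow \pi_1(\widetilde C_k(W;\pi)) \longrightarrow \pi_1(C_k(W;\pi)) \longrightarrow \Sigma_k \longrightarrow 1,
\]
so it suffices to (i) identify $\pi_1(\widetilde C_k(W;\pi))$ with $\pi_1(E)^k$, (ii) split the sequence, and (iii) verify that conjugation by $\Sigma_k$ permutes the $k$ factors.

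For (i), I would observe that $\widetilde C_k(W;\pi)$ is the pullback of $\pi^{(k)}\colon E^k \to \oW^k$ along the inclusion of the ordered configuration space $F_k(W) \hookrightarrow \oW^k$, and hence the total space of a fibre bundle over $F_k(W)$ with fibre $F^k$. Since each pairwise diagonal $\{w_i=w_j\}$ is a smooth submanifold of $\oW^k$ of codimension $d\geq 3$, a stratified general-position argument shows that any 2-disk or 2-sphere in $\oW^k$ can be perturbed off the fat diagonal $\Delta$. Equivalently, $F_k(W) \hookrightarrow \oW^k$ is $2$-connected, i.e.\ an isomorphism on $\pi_1$ and a surjection on $\pi_2$. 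The 5-lemma applied to the ladder of homotopy long exact sequences of the two $F^k$-bundles, with the fibre column the identity on $\pi_1(F^k)$, then yields $\pi_1(\widetilde C_k(W;\pi)) \cong \pi_1(E^k) = \pi_1(E)^k$.

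For (ii) and (iii), I would pick a small open disk $D \subset \oW$ over which $\pi$ trivialises together with a basepoint $f_0 \in F$; the section $w \mapsto (w, f_0)$ of $\pi|_D$ gives an inclusion $C_k(D) \hookrightarrow C_k(W;\pi)$. The same codimension argument inside the contractible $D^k$ shows $\pi_1(F_k(D)) = 1$, hence $\pi_1(C_k(D)) = \Sigma_k$, and by construction the composition $\Sigma_k = \pi_1(C_k(D)) \to \pi_1(C_k(W;\pi)) \to \Sigma_k$ is the identity, splitting the sequence. The deck group acts on $\widetilde C_k(W;\pi) \subset E^k$ by permuting the $k$ coordinates, so the induced conjugation action on $\pi_1(E)^k$ permutes the factors, giving the wreath product $\Sigma_k \wr \pi_1(E) = \pi_1(E)^k \rtimes \Sigma_k$.

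The main technical obstacle is the codimension step in (i): the fat diagonal is a union of codim-$d$ submanifolds rather than a single one, so the $2$-connectedness of $F_k(W) \hookrightarrow \oW^k$ really relies on stratified transversality (the strata of $\Delta$ all have codim $\geq d \geq 3$), or equivalently on an inductive use of the Fadell--Neuwirth fibrations $F_k(W) \to F_{k-1}(W)$ with fibre $W$ minus finitely many interior points, where the dimension-$\geq 3$ hypothesis enters in exactly the same way.
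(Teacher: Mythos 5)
Your argument is correct, and it reproduces the paper's treatment of the unordered part verbatim: the covering-space extension $1\to\pi_1(\widetilde C_k(W;\pi))\to\pi_1(C_k(W;\pi))\to\Sigma_k\to 1$, the splitting via $C_k(D)\hookrightarrow C_k(W;\pi)$ over a trivialising disk (using $\pi_1(C_k(D))=\Sigma_k$ for $d\ge 3$), and the observation that the deck action permutes the factors. Where you differ is in identifying $\pi_1(\widetilde C_k(W;\pi))\cong\pi_1(E)^k$: the paper inducts on $k$ using the forget-the-last-point fibration $E\setminus E|_{\mathbf w}\to\widetilde C_{k+1}(W;\pi)\to\widetilde C_k(W;\pi)$, splits it by the stabilisation section, and shows via a five-lemma comparison over $W\setminus\mathbf w\hookrightarrow W$ that the inclusion $E\setminus E|_{\mathbf w}\hookrightarrow E$ is a $\pi_1$-isomorphism, so each extension is split on both sides and hence a direct product; you instead make a single global comparison of the $F^k$-bundle $\widetilde C_k(W;\pi)\to F_k(W)$ with $E^k$ over the $k$-fold product of the interior of $W$, using that the inclusion of the ordered configuration space into the product is $2$-connected because the fat diagonal has codimension $d\ge 3$, and then one five lemma. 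The trade-off is clear: your route avoids the iterated split-extension bookkeeping and the stabilisation section, but its geometric input is the stratified general-position statement for the fat diagonal (a finite union of codimension-$d$ submanifolds), which you rightly flag and which is either proved by removing the pairwise diagonals one at a time or by falling back on exactly the Fadell--Neuwirth fibrations the paper uses; the paper's induction keeps the transversality input minimal (only the $2$-connectivity of $W\setminus\mathbf w\hookrightarrow W$) at the cost of an induction and the ``doubly split hence trivial'' argument. Both proofs rest on the same dimension hypothesis and reach the same wreath-product description, so your proposal is a legitimate, slightly reorganised alternative rather than a fundamentally new argument.
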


For $\pi = id: W\to W$ this is the braid group of $W$ and the result is well-known. For completeness we include here a proof of the 
more general case.

\begin{proof}
Consider the fibre bundle for the ordered configuration spaces
\begin{equation}
\tag{4.1}
E\setminus E|_{\bold w} \longrightarrow \widetilde C _{k+1} (W; \pi) \overset {eval_{1, \dots , k}} \longrightarrow \widetilde C_k (W; \pi)
\end{equation}
where the map $eval_{1, \dots ,k}$ remembers the  first $k$ points of the ordered configurations and forgets the last one. 
The fibre over the configuration $\bold x = \{ x_1, \dots , x_k \}$ with  $\pi (\bold x) = \bold w $ in $\widetilde C_k (W)$ is a point in $E$
which is not in any of the fibres $E|_{\bold w}$ over $\bold w$. 
The stabilisation map for ordered configuration spaces $\widetilde \sigma: \widetilde C_k (W; \pi) \to \widetilde C_{k+1}(W;\pi)$ 
defines a section (up to isotopy) of the above fibre bundle. Hence there is a split exact sequence of fundamental groups
\begin{equation}
\tag{4.2}
0 \longrightarrow
\pi_1( E\setminus E|_{\bold w}) \longrightarrow \pi_1 (\widetilde C _{k+1} (W; \pi)) \overset {eval_{1, \dots , k}} \longrightarrow 
\pi_1( \widetilde C_k (W; \pi)) \longrightarrow 0.
\end{equation}
The natural inclusion $incl: E\setminus E|_{\bold w} \hookrightarrow E$ factors as the composition of the inclusion of the fibre in (4.1) 
and the map
$eval_{k+1}: \widetilde C_{k+1} (W; \pi) \to E$ that maps an ordered configuration $\bold x = \{ x_1, \dots, x_k, x_{k+1}\} $ 
to its last point $x_{k+1}$.   

{\bf Claim.} {\it
The inclusion $incl: E\setminus E|_{\bold w} \hookrightarrow E $ induces an isomorphism on fundamental groups. 
}

{\it Proof of claim.} 
Note that $incl$ defines a map of fibre bundles covering the inclusion $ incl_W: W\setminus \bold w \hookrightarrow W$.
We thus have a map of exact sequences of homotopy groups
\begin{equation*}
\xymatrix{
\pi_2 (W\setminus \bold w) \ar[r] \ar[d]^{incl_W} &\pi_1 (F) \ar[r] \ar[d]^= &\pi_1 (E\setminus E|_{\bold w})
        \ar[r] \ar[d] &\pi_1 (W\setminus \bold w) \ar[r] \ar[d]^{incl_W} &\pi_0 (F) \ar[d]^=       \\
\pi_2 (W) \ar[r] &\pi_1 (F) \ar[r]  &\pi_1 (E) \ar[r] &\pi_1 (W) \ar[r] &\pi_0 (F).\\
}
\end{equation*}
As $\dim (W) \geq 3$, $incl_W$ induces an isomorphism on fundamental groups and
a surjection on second homotopy groups. An application of the Five Lemma finishes the proof of the claim.
\qed

Using  the claim we see that the group extension (4.2) is doubly split and thus trivial:   
$$
\pi_1 (\widetilde C _{k+1} (W; \pi) ) \simeq \pi_1 (E) \times \pi_1 (\widetilde C_k (W; \pi)).
$$
Hence, by induction, starting with $\widetilde C_1 (W;\pi) = \pi _1(E)$, we get  $\pi_1 (\widetilde C_k (W; \pi)) \simeq \pi_1 (E)^k$.

To deduce the result we want for  unordered configurations spaces consider the fibre bundle
$$
\Sigma _k \longrightarrow 
\widetilde C_k (W; \pi) \longrightarrow  C_k (W; \pi)
$$
and the associated short exact sequence of groups
\begin{equation}
\tag{4.3}
\pi_1 (\widetilde C_k (W; \pi)) \longrightarrow  \pi_1 (C_k (W; \pi)) \longrightarrow \Sigma_k.
\end{equation}
Let $D \subset W$ be an embedded disk of codimension zero. The restricted bundle $\pi|_D$ is trivialisable and hence in particular has a section. 
We can thus define    
inclusions $ C_k (D) \hookrightarrow C_k( D; \pi|_D) \hookrightarrow C_k (W; \pi)$.
As $\dim (W)\geq 3$, $\pi_1 ( C_k (D)) = \Sigma _k$ and the inclusion gives a splitting  of the  short exact sequence (4.3).
By definition $\Sigma_k$ acts by permuting the $k$ points in $E$ and we can deduce that 
the fundamental group of $C_k(W; \pi)$ is the wreath product.
\end{proof} 

Consider a general wreath product $\Sigma _k\wr G$ for a group $G$. 
Its classifying space has a configuration space model  given by
$$
B(\Sigma_k \wr G) \simeq E\Sigma_k \times_{\Sigma_k} (BG)^k \simeq C_k (\mathbb R^\infty; BG).
$$
The automorphism group $\text{Aut} (G)$ of $G$ acts on $BG$ by the naturallity of the 
bar construction. 
Applications of Theorem 2.1 and Theorem 2.2 give the following general result:

\begin{lemma} 
The stabilisation map
$
B(\Sigma_k \wr G ) \to B(\Sigma_{k+1} \wr G)
$
 is  (i) $\text{Aut} (G) $-equivariantly split injective and 
(ii) induces an isomorphisms in homology in degrees $* \leq k/2$.
\qed
\end{lemma}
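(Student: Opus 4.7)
The plan is to interpret $B(\Sigma_k \wr G)$ as a configuration space with labels in $BG$ and deduce both statements directly from Theorems 2.1 and 2.2. The identification
\[
B(\Sigma_k \wr G) \simeq E\Sigma_k \times_{\Sigma_k} (BG)^k \simeq C_k(\mathbb R^\infty; BG)
\]
recalled just before the lemma carries the group-theoretic stabilisation $\Sigma_k\wr G \hookrightarrow \Sigma_{k+1}\wr G$ (inserting the identity of $G$ in the last slot) to the configuration-space stabilisation $\sigma$ of (2.1) for the trivial bundle $\pi: \mathbb R^\infty \times BG \to \mathbb R^\infty$, with the new particle labelled by the basepoint of $BG$.

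To invoke Theorems 2.1 and 2.2, which are stated for compact manifolds, I would work with a sequence of compact manifolds with boundary $W_N$ (say $W_N = [-1,1]^N$ with $\partial_0 = \{-1\}\times [-1,1]^{N-1}$) exhausting $\mathbb R^\infty$, equipped with the trivial bundle $\pi_N = W_N \times BG$. Then $C_k(\mathbb R^\infty; BG) \simeq \operatorname{colim}_N C_k(W_N; \pi_N)$ and the stabilisation is the colimit of the $\sigma_N$. Theorem 2.2 gives that each $\sigma_N$ is a homology isomorphism in degrees $* \leq k/2$, and since singular homology commutes with sequential colimits along cofibrations this yields (ii). (Alternatively, since $\widetilde C_k(\mathbb R^N)$ is $(N{-}2)$-connected, a single $N$ large compared with the degree suffices.)

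For (i), Theorem 2.1 provides a $\diff(W_N; \pi_N)$-equivariant stable splitting of $\sigma_N$. By functoriality of the bar construction, every $\phi \in \Aut(G)$ defines a bundle automorphism $(w, y) \mapsto (w, B\phi(y))$ of $\pi_N$ covering the identity diffeomorphism of $W_N$ and fixing a collar of $\partial_0$; this embeds $\Aut(G)$ into $\diff(W_N; \pi_N)$. The basepoint $x_0 \in BG$ is fixed by every $B\phi$, so the $\Aut(G)$-action commutes with $\sigma_N$, and the splittings produced by Theorem 2.1 are therefore $\Aut(G)$-equivariant. Passing to the colimit in $N$ yields the required $\Aut(G)$-equivariant stable splitting for the stabilisation map of $B(\Sigma_k \wr G)$.

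The main obstacle I anticipate is the compatibility of the stable splittings across the tower $W_N \subset W_{N+1}$: one needs that the splittings provided by Theorem 2.1 (and ultimately by \cite{MT}) are natural enough that the individual $\Aut(G)$-equivariant splittings assemble to a stable splitting of the colimit, and not merely of each finite stage. This coherence is essentially built into the constructions of \cite{MT}, together with the observation that the newly added label $x_0$ is a strict fixed point of every $B\phi$; but cleanly verifying it is where the technical work lies.
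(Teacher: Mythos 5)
Your argument is essentially the paper's own proof: the lemma is deduced from the identification $B(\Sigma_k\wr G)\simeq C_k(\mathbb R^\infty;BG)$, the action of $\Aut(G)$ on the label space $BG$ via the bar construction, and direct application of Theorems 2.1 and 2.2. The only difference is that you spell out the routine details the paper leaves implicit (exhausting $\mathbb R^\infty$ by compact cubes and realising $\Aut(G)$ inside the bundle automorphisms fixing the added basepoint label), which is fine.
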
  

We note now  that the mapping class group $\Gamma (W;\pi) := \pi_0 (\diff (W;\pi)) $ 
acts on the fundamental group of $W$ and more generally on the fundamental groups of the
configuration spaces $C_k(W; \pi)$; here  we choose the base-point to be on $\partial _0$ so that it is fixed by the elements
in $\diff(W;\pi)$.
Applying  Lemma 4.2 with 
$G= \pi_1 (E)$ and $\Gamma (W; \pi)\subset \text{Aut} (G)$  and invoking Lemma 4.1
gives
the following desired result. 

\begin{corollary}
Assume $\dim ( W) \geq 3$ and
let $\pi: E \to W$ be a fibre bundle on $W$ with path-connected fibres.
The stabilisation map $\sigma : \pi_1 (C_k(W; \pi) )\to \pi _1 (C_{k+1} (W; \pi))$ induces
(i) a $\Gamma (W;\pi)$-equivariant stably split injective map on
classifying spaces; and
(ii) an isomorphism on homology in degrees $*\leq k/2$.
\qed
\end{corollary}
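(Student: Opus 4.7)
The plan is to reduce the statement to Lemmata 4.1 and 4.2, which have essentially already set up everything needed. First, Lemma 4.1 identifies $\pi_1(C_k(W;\pi)) \cong \Sigma_k \wr \pi_1(E)$ once $\dim W \geq 3$, so that $B\pi_1(C_k(W;\pi)) \simeq B(\Sigma_k \wr \pi_1(E))$. Under this identification, the stabilisation map $\sigma$ (which adds a particle near the base-point on $\partial_0$) should correspond to the standard inclusion $\Sigma_k \wr \pi_1(E) \hookrightarrow \Sigma_{k+1} \wr \pi_1(E)$ that appends a trivial factor. This is geometrically clear: the new particle is added at $(w_0,x_0)$ near $\partial_0$, which is the chosen base-point, so a loop representing an element of $\Sigma_k \wr \pi_1(E)$ is unchanged in the larger configuration and the new factor is trivial.

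Second, I would verify that the $\Gamma(W;\pi)$-action on $\pi_1(C_k(W;\pi))$ factors through the $\mathrm{Aut}(\pi_1(E))$-action used in Lemma 4.2. Since the base-point is on $\partial_0$ and every element of $\diff(W;\pi)$ fixes $\partial_0$ pointwise, every such diffeomorphism induces an automorphism of $\pi_1(E)$ based at $x_0$. Under the isomorphism of Lemma 4.1 this automorphism acts diagonally on the $\pi_1(E)$-factors of the wreath product and trivially on the $\Sigma_k$-factor, which is precisely the $\mathrm{Aut}(\pi_1(E))$-action of Lemma 4.2. This identification is natural in $k$, so stabilisation commutes with the $\Gamma(W;\pi)$-action.

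Once these identifications are in place, part (ii) is immediate from Lemma 4.2(ii), since it gives homological stability for the wreath product stabilisation in degrees $*\leq k/2$. For part (i), Lemma 4.2(i) provides an $\mathrm{Aut}(\pi_1(E))$-equivariant stable splitting of $B(\Sigma_k \wr \pi_1(E)) \to B(\Sigma_{k+1} \wr \pi_1(E))$, which pulls back to a $\Gamma(W;\pi)$-equivariant stable splitting via the homomorphism $\Gamma(W;\pi) \to \mathrm{Aut}(\pi_1(E))$.

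The main obstacle I expect is not any difficult calculation but rather the careful bookkeeping of equivariance: namely checking that the $\Gamma(W;\pi)$-action on $\pi_1(C_k(W;\pi))$ genuinely factors through $\mathrm{Aut}(\pi_1(E))$ in a way compatible with the wreath-product decomposition from Lemma 4.1, and that the stabilisation map is identified with the standard wreath-product inclusion under this isomorphism. Tracing through the split exact sequence (4.2) and the splitting of (4.3) used in the proof of Lemma 4.1, one sees that both the $\Gamma(W;\pi)$-action and the stabilisation map respect the wreath-product structure by construction, so this bookkeeping is straightforward but should be spelled out.
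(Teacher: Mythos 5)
Your proposal is correct and follows essentially the same route as the paper: identify $\pi_1(C_k(W;\pi))$ with $\Sigma_k\wr\pi_1(E)$ via Lemma 4.1, then apply Lemma 4.2 with $G=\pi_1(E)$ and the action of $\Gamma(W;\pi)$ through $\Aut(\pi_1(E))$. The paper states this in one line; your extra bookkeeping about the stabilisation map matching the standard wreath-product inclusion and the equivariance is exactly the implicit content of that line.
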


%%%%%%%%%%%%%%%%%%%%%%%%%%%%%%%%%%%%%%%%%%%%%%%%%%%%%%%%%%%%%%%%%%%%%%%%%

\section {Homology stability for mapping class groups}

We prove here the analogues of the homology stability of diffeomorphism groups  for the associated mapping class groups.
Theorem 5.1 was also proved by Hatcher and Wahl \cite {HatcherWahl} with different 
methods.  
When $W$ is a surface, which we assume has non-empty boundary, the mapping class groups are homotopic 
to the diffeomorphism groups, see \cite {EarleSchatz}. 
So the analogues for the mapping class groups 
follow immediately from the results for the diffeomorphism groups proved in section 3. 
(See also \cite {BoedigheimerT} for results closely related to Theorem 5.1 and 5.2.)
We therefore have to prove the statements below only in the case 
when $\dim (W) \geq 3$ and Corollary 4.3 applies.  

Let $\Gamma ^k(W):= \pi_0 (\diff^k (W))$ be the mapping class group of $W$ with $k$ punctures. 
%As before, we assume that a $(d-1)$-disk 
%$\partial _0 \subset \partial W$ is point-wise fixed by the diffeomorphisms. The stabilisation map $\sigma: \diff ^k (W) \to \diff ^{k+1} (W)$
%induces a map of mapping class groups. 

\begin{theorem}
The  map $\sigma :B\Gamma ^k(W) \to B\Gamma ^{k+1} (W)$  is a stable split injection and an isomorphism in homology for degrees  $*\leq k/2$.
\end{theorem}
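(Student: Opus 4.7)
The proof will mirror that of Theorem 3.1, with the configuration space $C_k(W)$ replaced throughout by its $1$-type $B\pi_1(C_k(W))$, and Theorems 2.1 and 2.2 replaced by Corollary 4.3.

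The first step is to establish a Birman-type short exact sequence
$$
1 \longrightarrow \pi_1(C_k(W)) \longrightarrow \Gamma^k(W) \longrightarrow \Gamma(W) \longrightarrow 1.
$$
I would obtain this from the Palais fibre bundle (2.2), which presents $\diff^k(W) \to \diff(W) \to C_k(W)$ as a fibre sequence. The associated long exact sequence in homotopy,
$$
\pi_1(\diff(W)) \longrightarrow \pi_1(C_k(W)) \longrightarrow \Gamma^k(W) \longrightarrow \Gamma(W) \longrightarrow \pi_0(C_k(W)) = 0,
$$
already gives surjectivity of $\Gamma^k(W) \to \Gamma(W)$. For the left injectivity, Lemma 4.1 identifies $\pi_1(C_k(W))=\Sigma_k\wr\pi_1(W)$, and any loop coming from $\diff(W;\rel\partial_0)$ has trivial permutation part (since $\psi_0=\psi_1=\id$), so lands in the pure factor $\pi_1(W)^k$; the required vanishing then reduces to triviality of each evaluation $\pi_1(\diff(W;\rel\partial_0))\to\pi_1(W)$ at an interior base-point, a standard Birman ingredient in dimension $\geq 3$ with pinned boundary disk. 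Taking classifying spaces produces a fibration sequence
$$
B\pi_1(C_k(W)) \longrightarrow B\Gamma^k(W) \longrightarrow B\Gamma(W).
$$

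Second, I would verify compatibility with stabilisation: $\sigma$ on $\Gamma^k(W)$ descends from extension-by-identity on $\diff^k(W)$, and on the Birman kernel it is the stabilisation $\sigma:\pi_1(C_k(W))\to\pi_1(C_{k+1}(W))$ of Corollary 4.3 (with $\pi = \id_W$). Everything commutes with the identity on $B\Gamma(W)$, yielding a map of fibrations. I would then run the Serre spectral sequence argument as in Theorem 3.1: the $E^2$-page is
$$
E^2_{p,q} = H_p\bigl(B\Gamma(W);\, H_q(B\pi_1(C_k(W)))\bigr),
$$
converging to $H_{p+q}(B\Gamma^k(W))$. Corollary 4.3(ii) makes $\sigma$ an isomorphism on $E^2$-terms for $q\leq k/2$ and all $p$, and Lemma 2.7 then upgrades this to an isomorphism of abutments in total degrees $*\leq k/2$.

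For the stable split injection, the Borel-construction model $B\Gamma^k(W) \simeq E\Gamma(W)\times_{\Gamma(W)} B\pi_1(C_k(W))$ combines with the $\Gamma(W)$-equivariant stable splitting of Corollary 4.3(i) and Lemma 2.8 to give a stable splitting of $\sigma: B\Gamma^k(W)\to B\Gamma^{k+1}(W)$. The main obstacle is Step~1: verifying the Birman short exact sequence in the required generality, i.e.\ the vanishing of the image of $\pi_1(\diff(W;\rel\partial_0))$ in $\pi_1(C_k(W))$ for $\dim W\geq 3$. Once this is secured, the remainder of the argument is a line-by-line transcription of the proof of Theorem 3.1 with $B\pi_1(C_k(W))$ substituted for $C_k(W)$ and Corollary 4.3 substituted for Theorems 2.1--2.2.
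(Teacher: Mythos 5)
Your outline is essentially the paper's own argument: Theorem 5.1 is proved there as the special case $H=G=O(d)$ (resp.\ $SO(d)$) of Theorem 5.3, whose proof is exactly your scheme --- a Birman-type extension $0\to\pi_1(C_k(W;\cdot))\to\Gamma\to\Gamma(W)\to 0$ extracted from the long exact sequence of the evaluation fibration, the Lyndon--Hochschild--Serre/Leray--Serre spectral sequence with Corollary 4.3(ii) and Lemma 2.7, and the Borel-construction model with Corollary 4.3(i) and Lemma 2.8 for the stable splitting. So the only substantive comments concern the two places where your write-up is incomplete.

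First, the step you flag as the ``main obstacle'' is exactly what the paper closes using the maps (3.1), and you should do the same rather than appeal to a ``standard Birman ingredient.'' The iterated stabilisation $\sigma^k:\diff(W)\to\diff^k(W)$ places the new marked points in the collar glued along $\partial_0$, where the extended diffeomorphisms are the identity, and the forgetful map $\diff^k(W)\to\diff(W)$ composed with $\sigma^k$ is homotopic to the identity; hence $\pi_1(\diff^k(W))\to\pi_1(\diff(W))$ is split surjective, and exactness of the long exact sequence of the fibration $\diff^k(W)\to\diff(W)\to C_k(W)$ forces the evaluation $\pi_1(\diff(W))\to\pi_1(C_k(W))$ to vanish --- which is the left-exactness you need, with no need for your reduction to the pure part and to single coordinates. (Alternatively, a direct argument: every $\psi$ in $\diff(W;\rel\partial_0)$ fixes pointwise the collar region attached along $\partial_0$, so evaluation at an interior point is homotopic, through evaluations along a path into that fixed region, to a constant map; hence it is nullhomotopic and in particular trivial on $\pi_1$.) Second, your proof only treats $\dim W\geq 3$: Lemma 4.1, Corollary 4.3 and the identification $\pi_1(C_k(W))\cong\Sigma_k\wr\pi_1(W)$ all fail for surfaces, whereas the theorem is stated for $d\geq 2$. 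The paper disposes of $d=2$ separately by citing Earle--Schatz, so that $B\Gamma^k(W)\simeq B\diff^k(W)$ for surfaces with boundary and Theorem 3.1 applies. With these two points supplied, your argument coincides with the paper's.
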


%\begin{proof}
%From the homotopy fibre sequence (3.2) 
%we get a long exact sequence on homotopy groups
%$$
%\pi_1 (\diff ^k(W)) \longrightarrow \pi_1 (\diff(W)) \longrightarrow \pi_1 (C_k(W)) \longrightarrow \Gamma ^k(W) \longrightarrow 
%\Gamma (W) \longrightarrow 0.
%$$
%By (3.1) it follows that the first map is split surjective. Hence we have a short exact sequence of groups
%$$
%0 \longrightarrow \pi_1 (C_k(W)) \longrightarrow \Gamma ^k(W) \longrightarrow
%\Gamma (W) \longrightarrow 0.
%$$
%These are compatible with the stabilisation maps. The result follows from Corollary 4.3 
%and
%the Leray-Serre spectral sequence
%$$
%E^2_{p,q} = H_p( \Gamma (W), H_q (\pi_1 (C_k(W)))) \Longrightarrow H_{p+q} ( \Gamma^k (W))
%$$
%in conjunction with the results of section 2.3.
%\end{proof}

Define $\Gamma _k(W) := \pi_0 (\diff _k(W))$.

\begin{theorem}
The map $\sigma : B\Gamma _k(W) \to B\Gamma _{k+1} (W)$  is a stable split injection  and an isomorphism in homology for degrees $*\leq k/2$. 
\end{theorem}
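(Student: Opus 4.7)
The plan is to imitate the proof of Theorem 3.2 step by step, substituting mapping class groups for diffeomorphism groups and Corollary 4.3 for Theorems 2.1 and 2.2. For surfaces the statement follows at once from Theorem 3.2 via the homotopy equivalence $\diff_k(W) \simeq \Gamma_k(W)$ of \cite{EarleSchatz}, so I assume $\dim W \geq 3$, in which range Corollary 4.3 is available.

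Applying $\pi_0$ to the evaluation fibration
\[
\diff_k(W) \longrightarrow \diff(W) \xrightarrow{E_k^T} C_k(W;\Fr)
\]
from the proof of Theorem 3.2, and using the path-connectedness of $C_k(W;\Fr)$, produces a Birman-type short exact sequence
\[
1 \longrightarrow P_k \longrightarrow \Gamma_k(W) \longrightarrow \Gamma(W) \longrightarrow 1,
\]
where $P_k$ is the cokernel of $\pi_1(\diff(W)) \to \pi_1(C_k(W;\Fr))$. This yields a fibration $BP_k \to B\Gamma_k(W) \to B\Gamma(W)$ compatible with the stabilization $\sigma$ (which is the identity on the base). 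Comparing the associated Serre spectral sequences for $k$ and $k+1$ and invoking Lemma 2.7 reduces the homology isomorphism claim to showing that $BP_k \to BP_{k+1}$ is a homology isomorphism in degrees $* \leq k/2$. For the stable splitting, one realizes $B\Gamma_k(W)$ as the total space of this $BP_k$-fibration over $B\Gamma(W)$ and applies the equivariant form of Lemma 2.8, reducing the claim to a stable $\Gamma(W)$-equivariant split injectivity of $BP_k \to BP_{k+1}$.

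Corollary 4.3 provides both of these properties, but for $B\pi_1(C_k(W;\Fr))$ rather than $BP_k$, so the essential remaining work---and the main obstacle---is to descend them across the quotient $\pi_1(C_k(W;\Fr)) \twoheadrightarrow P_k$ by the image $I_k := \im(\pi_1(\diff(W)) \to \pi_1(C_k(W;\Fr)))$. The subgroups $I_k$ are related by $\sigma$, so the extensions $1 \to I_k \to \pi_1(C_k(W;\Fr)) \to P_k \to 1$ for consecutive $k$ fit into a commutative diagram. My approach would be to compare their Lyndon-Hochschild-Serre spectral sequences, using the known homology isomorphism of Corollary 4.3(ii) on the abutments together with the naturality of the stabilization on the $I_k$, to extract the corresponding homology isomorphism for $P_k$ in the same range; an analogous equivariant comparison, combined with the naturality of the splittings from Corollary 4.3(i), would then transfer the stable splitting. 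Carrying the $\Gamma(W)$-equivariance uniformly through these comparisons, and dealing with the fact that the Birman extension is in general non-split so that $B\Gamma_k(W)$ is a Borel construction only in a weaker outer-action sense, is the technical heart of the proof.
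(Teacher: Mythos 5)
Your overall architecture is the same as the paper's (the paper proves the general Theorem 5.3 and specialises): apply homotopy groups to the evaluation fibration $\diff_k(W)\to\diff(W)\to C_k(W;\Fr)$, obtain a Birman-type extension of $\Gamma(W)$ by a group $P_k$, compare Serre/Lyndon--Hochschild--Serre spectral sequences via Lemma 2.7, and get the splitting via Lemma 2.8. But there is a genuine gap exactly where you place ``the technical heart'': you leave $P_k$ as an unknown quotient of $\pi_1(C_k(W;\Fr))$ by the image $I_k$ of $\pi_1(\diff(W))$, and propose to transfer stability across the extensions $1\to I_k\to \pi_1(C_k(W;\Fr))\to P_k\to 1$ by comparing their LHS spectral sequences. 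As stated this does not work: knowing that the abutments $H_*(\pi_1(C_k(W;\Fr)))$ stabilise gives you no control over the $E^2$-terms $H_p(P_k;H_q(I_k))$, which is precisely the kind of information you would need as input; the argument runs in the wrong direction and is essentially circular. You would also have no handle on how $I_k$ itself varies with $k$.

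The missing idea, which is the one short step the paper actually uses, is that the quotient is by nothing at all: the iterated stabilisation $\sigma^k:\diff(W)\to\diff_k(W)$ has a homotopy left inverse, namely the forgetful map of (3.1). Hence $\pi_1(\diff_k(W))\to\pi_1(\diff(W))$ is split surjective, and exactness of the long exact sequence of the fibration $\diff_k(W)\to\diff(W)\to C_k(W;\Fr)$ forces the next map $\pi_1(\diff(W))\to\pi_1(C_k(W;\Fr))$ to be zero. So $I_k$ is trivial, $P_k\cong\pi_1(C_k(W;\Fr))$, and the Birman sequence is $0\to\pi_1(C_k(W;\Fr))\to\Gamma_k(W)\to\Gamma(W)\to 0$, compatibly with stabilisation. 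At that point Corollary 4.3 applies verbatim to the fibre term: part (ii) feeds the $E^2$-page $H_p(\Gamma(W);H_q(\pi_1(C_k(W;\Fr))))$ and Lemma 2.7 gives the homology isomorphism in degrees $*\leq k/2$, while part (i) (the $\Gamma(W;\pi)$-equivariant stable splitting) together with Lemma 2.8, applied to the Borel-construction model of $B\Gamma_k(W)$ coming from this split-free situation, gives the stable split injection. This also dissolves your worry about the extension being non-split in a way that would leave only an outer action: once the kernel is identified as $\pi_1(C_k(W;\Fr))$ with its $\Gamma(W;\Fr)$-action from Corollary 4.3, the paper's Lemma 2.8 machinery applies directly. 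With that observation inserted, the rest of your outline is correct and coincides with the paper's proof.
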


%\begin{proof}
%Starting with the homotopy fibre sequence (3.3) the proof of Theorem 5.1 carries through
%to prove our result. In particular by (3.1) we have a short exact sequence of discrete groups
%$$
%0 \longrightarrow \pi_1 (C_k(W; \Fr)) \longrightarrow \Gamma _k(W) \longrightarrow
%\Gamma (W) \longrightarrow 0,
%$$
%to which we apply Corollary 4.3  and Theorems 2.7 and 2.8.
%\end{proof}
%
%
\vskip .2in
Define 
$
\Sigma_G \Gamma (W\sharp_k Q) := \pi_0 (\Sigma_G \diff (W\sharp _k Q))
$ 
where 
$G\subset O(d) \subset \diff (S^{d-1})$ is a closed subgroup.
More generally, for any group $H$ and homomorphisms $\rho : H\to G$
define 
$$
\Sigma_G^H \Gamma_k (W):= \pi_0 ( \Sigma^H_G \diff_k (W)).
$$
As explained at the end of section 3, the following  generalises  Theorems 5.1 and 5.2.

\begin{theorem}
%The map $\sigma : B\Sigma_G \Gamma (W \sharp_k Q) \to B\Sigma_G \Gamma  (W \sharp_{k+1} Q)$  
%is a stable split injection  and an isomorphism in homology for degrees  $*\leq k/2$.
%
%More generally, 
The map $\sigma : B\Sigma^H_G \Gamma _k(W ) \to B\Sigma^H_G \Gamma _{k+1} (W )$  
is a stable split injection  and an isomorphism in homology for degrees  $*\leq k/2$.

\end{theorem}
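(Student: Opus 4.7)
In view of the reduction at the start of the section, it suffices to treat the case $\dim(W)\geq 3$, so that Corollary~4.3 is at our disposal. The plan is to mirror the proof of Theorem~3.4 one homotopy level lower: replace the configuration-space fibres by the classifying spaces of their fundamental groups, and replace the topological diffeomorphism groups by their discrete mapping class groups, so that Corollary~4.3 plays the role that Theorems~2.1 and~2.2 played before.

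The first step is to extract from the middle column of the fibration diagram (3.4) a Birman-type exact sequence of groups
$$
\pi_1(\diff(W)) \longrightarrow \pi_1(C_k(W;\Fr\times_G BH_1)) \longrightarrow \Sigma_G^H\Gamma_k(W) \longrightarrow \Gamma(W) \longrightarrow 1,
$$
obtained from the long exact sequence in homotopy together with the path-connectedness of $C_k(W;\Fr\times_G BH_1)$. Letting $K_k$ denote the image of $\pi_1(C_k(W;\Fr\times_G BH_1))$ in $\Sigma_G^H\Gamma_k(W)$, one gets a short exact sequence $1\to K_k \to \Sigma_G^H\Gamma_k(W)\to\Gamma(W)\to 1$ of discrete groups and hence a homotopy fibration
$$
BK_k \longrightarrow B\Sigma_G^H\Gamma_k(W) \longrightarrow B\Gamma(W),
$$
which is compatible with the stabilisation map and the identity on the base.

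Next I would combine Corollary~4.3 with a naturality argument to deduce that the fibre map $BK_k\to BK_{k+1}$ is $\Gamma(W)$-equivariantly stably split injective and a homology isomorphism in degrees $*\leq k/2$. The point is that $K_k = \pi_1(C_k(W;\Fr\times_G BH_1))/I_k$, where $I_k$ is the image of $\pi_1(\diff(W))$; by naturality of the long exact sequence with respect to stabilisation, the subgroups $I_k$ form a compatible family receiving a common surjection from the $k$-independent group $\pi_1(\diff(W))$. A five-lemma argument applied to the Serre spectral sequences of the group extensions $1\to I_k \to \pi_1(C_k)\to K_k \to 1$ transfers the stability results of Corollary~4.3 for $B\pi_1(C_k(W;\Fr\times_G BH_1))$ to the quotients $BK_k$.

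The final step then proceeds exactly as in the proof of Theorem~3.4: Lemma~2.7 applied to the Serre spectral sequence $E^2_{p,q} = H_p(B\Gamma(W);H_q(BK_k))$ upgrades the fibre homology isomorphism to one on $H_*(B\Sigma_G^H\Gamma_k(W))$ in degrees $*\leq k/2$, while Lemma~2.8 applied to the Borel-construction model $B\Sigma_G^H\Gamma_k(W) \simeq E\Gamma(W)\times_{\Gamma(W)} BK_k$ upgrades the equivariant fibre stable splitting to the required stable splitting of total spaces. The main obstacle is the comparison step above: one must verify that the equivariant stable splitting and the homology-stability range in Corollary~4.3 both survive the passage from $B\pi_1(C_k(W;\Fr\times_G BH_1))$ to its quotient $BK_k$ by the contribution of $\pi_1(\diff(W))$, which requires a careful analysis of the subgroups $I_k$ and their behaviour under stabilisation.
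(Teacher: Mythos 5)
Your overall strategy---running the proof of Theorem 3.4 one homotopy level down, feeding Corollary 4.3 into the Serre spectral sequence of an extension over $\Gamma(W)$, and finishing with Lemmas 2.7 and 2.8---is the right one, but there is a genuine gap exactly where you pass to the image $K_k$ of $\pi_1(C_k(W;\Fr\times_G BH_1))$ in $\Sigma_G^H\Gamma_k(W)$. The paper never has to deal with such a quotient: by the forgetful maps in (3.1) the iterated stabilisation $\sigma^k$ has a (homotopy) left inverse, so the projection $B\Sigma_G^H\diff_k(W)\to B\diff(W)$ in the middle column of (3.4) admits a section up to homotopy; hence the first map $\pi_1(\Sigma_G^H\diff_k(W))\to\pi_1(\diff(W))$ in the long exact sequence is split surjective, the connecting homomorphism $\pi_1(\diff(W))\to\pi_1(C_k(W;\Fr\times_G BH_1))$ is trivial, and one obtains a genuine short exact sequence $0\to\pi_1(C_k(W;\Fr\times_G BH_1))\to\Sigma_G^H\Gamma_k(W)\to\Gamma(W)\to 0$. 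In your notation this says $I_k=1$ and $K_k=\pi_1(C_k(W;\Fr\times_G BH_1))$ on the nose, so Corollary 4.3 applies directly to the fibre of the extension.

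Because you do not make this observation, your argument rests on the ``comparison step'' you yourself flag as the main obstacle, and that step is not merely unverified---as sketched it does not work. Homology stability for the groups $\pi_1(C_k)$ does not formally descend to quotients $K_k=\pi_1(C_k)/I_k$ by a varying family of normal subgroups: in the extension $1\to I_k\to\pi_1(C_k)\to K_k\to 1$ the associated spectral sequence has the form $H_p(K_k;H_q(I_k))\Rightarrow H_{p+q}(\pi_1(C_k))$, so knowledge of the abutment (Corollary 4.3) gives no control of the $q=0$ row without independent information about $H_*(I_k)$, its $K_k$-action, and its behaviour under stabilisation---none of which is supplied by anything in the paper; a five-lemma argument runs into the same problem. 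Likewise the $\Gamma(W)$-equivariant stable splitting of Corollary 4.3(i) is a statement about $B\pi_1(C_k(W;\pi))$, not about quotient groups, and does not automatically descend to $BK_k$. Once you insert the splitting coming from (3.1) and conclude $I_k=1$, the comparison issue disappears and your final paragraph is precisely the paper's proof.
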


\begin{proof}
From the fibration of the middle column of diagram (3.4)
$$
C_k (W; \Fr \times _G BH_1) \longrightarrow B\Sigma ^H_G \diff_k (W) \longrightarrow 
B\diff(W)
$$ 
we get a long exact sequence of homotopy groups 
$$
\pi_1 (\Sigma ^H_G \diff_k (W)) \longrightarrow \pi_1 (\diff (W)) \longrightarrow 
\pi_1 (C_k (W; \Fr \times _G BH_1)) \longrightarrow  
\Sigma_G^H \Gamma_k (W) \longrightarrow
\Gamma (W) \longrightarrow 0.
$$
Using the maps in (3.1) we see  that the first map is split surjective, and hence we have a 
short exact sequence of discrete groups 
$$
0 \longrightarrow \pi_1 (C_k (W; \Fr \times _G BH_1)) \longrightarrow  
\Sigma_G^H \Gamma_k (W) \longrightarrow
\Gamma (W) \longrightarrow 0.
$$
For different $k$ these short exact sequences are compatible with the stabilisation maps. Consider the
Leray-Serre spectral sequence
$$
E^2_{p,q} = H_p( \Gamma (W), H_q (\pi_1 (C_k(W  ; \Fr \times _G BH_1) ))) 
\Longrightarrow H_{p+q} (   \Sigma_G^H \Gamma_k (W) ).
$$
The result 
now follows from Corollary 4.3 and an application of Lemmas 2.7 and 2.8. 
\end{proof}

%%%%%%%%%%%%%%%%%%%%%%%%%%%%%%%%%%%%%%%%%%%%%%%%%%%%%%%%%%%%%%%%%%%%%%%%

\section {Variations and extensions}

Homology stability of configuration spaces has been studied extensively 
in recent years and the different versions and strengthenings of Theorem 2.2 
lead immediately to variants of our main theorems. We mention a few 
of them here.

\subsection{Changing coefficients}
The stability range of $k/2$ in our main theorems  
is a direct consequence of the stability range for the homology of 
the configuration spaces in Theorem 2.2.
The latter stability range can be improved 
when we change the  coefficients of the homology theory. 
For example, in \cite {CPalmer} it is shown that for $\dim W \geq 3$ 
the stability range can be improved to  $* \leq k-1$
if we take homology with coefficients in $\mathbb Z[ 1/2]$.
Hence, Theorems 1.1, 1.2, 1.3 and their mapping class group 
analogues Theorems 5.1, 5.2, 5.3 hold for such  manifolds 
 with this improved stability range. 
See \cite {CPalmer} and papers cited there for a discussion 
of field coefficients more generally.

%\subsection{Closed manifolds}
%The results by Cantero and Palmer \cite {CPalmer} can be 
%applied to also yield results for closed $W$.

\subsection{Boundary conditions and subgroups}
In order to define stabilisation maps, we considered only diffeomorphisms that fix point-wise a disk $\partial _0$ in the boundary of
$\partial W$. We may introduce any other compatible boundary conditions on 
the diffeomorphisms and analogues of our main theorems will hold. 

We may also consider any subgroups of $\diff (W)$ as long as the evaluation 
map $E_k$ restricts to a surjective map onto $C_k (W)$ and induces homotopy 
fibre sequences analogous to those in (3.2) and (3.3). 
Similarly, Theorem 3.4 allows us already to replace  
the group $\diff _G(Q_1)$ by any of its subgroups as long
as the restriction to the boundary defines a fibration over $G$.   

%Meta-Theorem: $G\subset O(d), H \subset \diff_G (Q_1), K \subset \diff %(W_k)$ such that map is onto $Sigma_k\wr G$.

\subsection{Replacing the symmetric by the alternating group}
The three types of diffeomorphism groups that we consider  
all come equipped with a natural surjection to the symmetric group $\Sigma_k$
by considering the permutation of the $k$ punctures, deleted disks and 
copies of $Q\setminus \text{int } (D)$ respectively. This defines in particular 
short exact sequences of groups:
$$
P _G\diff (W\sharp_k Q) \longrightarrow \Sigma _G\diff (W\sharp _k Q) \longrightarrow \Sigma_k.
$$
We refer to these therefore as symmetric 
diffeomorphism groups as in the title. Similarly, we can define alternating 
diffeomorphism groups as  subgroups of the symmetric diffeomorphism groups fitting into the extension sequence
$$
P _G\diff (W\sharp_k Q) \longrightarrow A_G\diff (W\sharp_k Q) \longrightarrow A_k.
$$
Here $A_k \subset \Sigma _k$ denotes the alternating group. 
%We will show that
%due a  result  of Palmer \cite {Palmer}, 
%analogues of the above theorems  also hold for 
%the diffeomorphism groups that live over the alternating group $A_k$. 
In \cite{Palmer} Palmer considered oriented configuration spaces with labels in a space $X$. These are defined as the orbit spaces
$$
C_k^+(W; X) := \widetilde C_k ( W; X)/ A_{k}
$$
of  ordered configuration spaces under the action of the alternating group 
$A_k$. Palmer
proves homology stability for these spaces: the stabilisation maps induce 
in homology 
isomorphisms for 
$* \leq (k-5)/3$ and surjections for $* \leq (k-2)/3$. 
%He only considers however the case when $\pi$ is trivial. 
This is enough for the analogues of Theorems 1.1, 1.2 and 1.3   
 with these stability conditions
%with isomorphism for $* \leq (k-5)/3$ and surjections for $* \leq (k-2)/3$
to be proved for 
the alternating  diffeomorphism and mapping class groups
{\it as long as} $W$ is parallelisable and the frame  bundle $\Fr$ is trivial.
%that permute $k$ marked points on $W$ by an element in the 
%alternating group $A_k$.
%Palmer's  proof in \cite {Palmer} is based on \cite {RW}.

\subsection{Group completion}
Take $W=D$ to be the $d$-dimensional disk.
The union 
$$
\mathbb M :=\coprod_{k\geq 0} B\Sigma_G\diff(D \sharp _k Q)
$$  
forms a monoid induced by connected sum. 
Taking models of $B\Sigma_G \diff (D\sharp _kQ)$ of embedded manifolds 
in Euclidean space,  
one can construct an action of the standard little $d$-disk operad 
on $\mathbb M$. Thus its group completion $\Omega B \mathbb M$ 
is a $d$-fold loop space. As $d \geq 2$ the monoid $\mathbb M$ is 
therefore homotopy commutative. Define 
$$
\Sigma _G\diff (D \sharp_\infty Q) := \hocolim_k \Sigma _G\diff (D\sharp_k Q).
$$ 
By the strengthened group completion theorem, see \cite {RW2} \cite {MillerPalmer} and 
references therein, the 
fundamental group of the classifying space of the monoid $\mathbb M$
%$\pi_1 (B\Sigma \diff (D\sharp_\infty Q)$ 
has 
perfect commutator subgroup and 
\begin{equation}
\tag{6.1}
\Omega B \mathbb M\simeq \mathbb Z \times B \Sigma_G \diff (D\sharp _\infty Q) ^+; 
\end{equation}
here $X^+$ denotes the Quillen plus construction of $X$ with 
respect to the maximal perfect subgroup of the fundamental group $\pi_1 (X)$. 
As this space is a 
homotopy commutative  H-space, its homology is a bi-commutative Hopf algebra. 
The structure of these 
has been studied in \cite {MilnorMoore}. 
In particular, if its homology with rational coefficients is of finite 
type then it is a graded polynomial algebra, 
that is a polynomial algebra on even dimensional generators tensor 
an exterior algebra on odd dimensional generators.
By the homology stability theorem, Theorem 1.3, we deduce that 
$$
H_*(B\Sigma_G\diff (D\sharp _k Q); \mathbb Q) = \mathbb Q[\,q_i \,; i \in I]
$$ 
is a polynomial algebra
in degrees 
less or equal to $k/2$ on generators $q_i$ indexed by some set $I$.

Similarly, we may  consider the monoid built from mapping class groups, 
$$
\mathbb M^\Gamma_Q:= \coprod_{k\geq 0} B \Sigma_G \Gamma (D\sharp_kQ).
$$
 Define 
$
\Sigma _G\Gamma (D \sharp_\infty Q) := \lim_k \Sigma_G \Gamma (D\sharp_k Q).
$ 
Then by the same reasoning as above, its 
commutator subgroup  is perfect and 
\begin{equation}
\tag{6.2}
\Omega B \mathbb M^\Gamma \simeq \mathbb Z \times B \Sigma_G \Gamma 
(D\sharp _\infty Q) ^+ 
\end{equation}
is a $d$-fold loop space. If its rational homology is of finite type then it is
a graded polynomial algebra.

%%%%%%%%%%%%%%%%%%%%%%%%%%%%%%%%%%%%%%%%%%%%%%%%%%%%%%%%%%%%%%%%%%%%%%%%

%%%%%%%%%%%%%%%%%%%%%%%%%%%%%%%%%%%%%%%%%%%%%%%%%%%%%%%%%%%%%%%%%%%%%%%%%%%%%%%
% % % % % % % % % % % % % % % % % % % % % % % % % % % % % % % % % % % % % % % %

	% the bibliography
	\addcontentsline{toc}{section}{References}
	\bibliography{RM_Paper_Biblio}

\providecommand{\bysame}{\leavevmode\hbox to3em{\hrulefill}\thinspace}
\providecommand{\MR}{\relax\ifhmode\unskip\space\fi MR }
% \MRhref is called by the amsart/book/proc definition of \MR.
\providecommand{\MRhref}[2]{%
  \href{http://www.ams.org/mathscinet-getitem?mr=#1}{#2}
}
\providecommand{\href}[2]{#2}
\begin{thebibliography}{GMTW}

\bibitem[B]{Boedigheimer}
C.-F. B{\"o}digheimer, \emph{Stable splittings of mapping spaces}, Algebraic
  topology ({S}eattle, {W}ash., 1985), Lecture Notes in Math., vol. 1286,
  Springer, Berlin, 1987, pp.~174--187.

\bibitem[BT]{BoedigheimerT}
Carl-Friedrich B\"odigheimer and Ulrike Tillmann,  
\emph{Stripping and splitting decorated mapping class groups}. Cohomological methods in homotopy theory (Bellaterra, 1998), 47--57, Progr. Math., \textbf{196}, Birkhäuser, Basel, 2001.


\bibitem[CP]{CPalmer}
Federico Cantero and Martin Palmer,
\emph{On homological stability for configuration spaces on closed background manifolds}, 
arXiv:1406.4916

\bibitem[C1]{Cerf68}
Jean Cerf, \emph{Sur les diff\'eomorphismes de la sphere de dimension trois,
$\Gamma _4 =0$}, Springer Lect. Notes Math. {\bf 53} (1968).

\bibitem[C2]{Cerf70}
Jean Cerf, \emph  {La stratification naturelle des espaces de fonctions diff\'erentiables r\'eelles et le th\'eorème de la pseudo-isotopie}, (French) Inst. Hautes Études Sci. Publ. Math. No. \textbf {39} (1970), 5--173. 




\bibitem[ES]{EarleSchatz}
Clifford F. Earle and A.  Schatz,\emph { Teichm\"uller theory for surfaces with boundary}, J. Differential Geometry \textbf { 4} (1970) 169--185.


\bibitem[GMTW]{GMTW}
Soren Galatius, Ib Madsen,  Ulrike Tillmann and Michael Weiss, 
\emph{The homotopy type of the cobordism category}, 
Acta Math. \textbf{202} (2009), no. 2, 195–-239.

\bibitem[GRW1]{GRW1}
Soren Galatius and Oscar Randal-Williams,
\emph{Stable moduli spaces of high dimensional manifolds},
Acta Math. {\bf 212} (2014), no. 2, 257--377.

\bibitem[GRW2]{GRW2}
Soren Galatius and Oscar Randal-Williams,
\emph{Homological stability for moduli spaces of high dimensional manifolds},
arXiv:1203.6830

\bibitem[H]{Hatcher}
Allen Hatcher, \emph {A proof of the Smale conjecture, $\diff(S^3)\simeq O(4)$}, 
Ann. of Math. (2) \textbf{117} (1983), no. 3, 553--607.

\bibitem[HW]{HatcherWahl}
Allen Hatcher and Nathalie Wahl,
\emph{ Stabilization for mapping class groups of 3-manifolds}, 
Duke Math. J. \textbf {155} (2010), no. 2, 205-–269.

\bibitem[I]{Igusa}
Kiyoshi Igusa, 
\emph{The stability theorem for smooth pseudoisotopies}, 
K-Theory \textbf{2} (1988), no. 1-2, vi+355pp

%\bibitem[JR]{Rita}
%Rita Jimenez Rolland, \emph{Representation stability for the cohomology 
%of the moduli space $\mathcal M^n_g$}, 
%Algebr. Geom. Topol. \textbf {11} (2011), no. 5, 3011--3041.


\bibitem[KM]{KupersMiller}
Alexander Kupers and Jeremy Miller,
\emph{$E_n$-cell attachments and a local-to-global principle for homological stability},  
arXiv:1405.7087 

\bibitem[L]{Lima}
Elon L. Lima, {\emph On the local triviality of the restriction map for embeddings}, Comment. Math. Helv. {\bf 38} (1964), 163--164.

\bibitem[MW]{MW}
Ib~Madsen and Michael Weiss, \emph{The stable moduli space of {R}iemann
  surfaces: {M}umford's conjecture}, Ann. of Math. (2) \textbf{165} (2007),
  no.~3, 843--941.

\bibitem[MT]{MT}
Richard Manthorpe and Ulrike Tillmann, 
\emph{Tubular configurations: equivariant scanning and splitting}, Jour. London Math. Soc. \textbf  {90} (2014), 940--962.


\bibitem[McD]{McDuff}
Dusa McDuff, \emph{Configuration spaces of positive and negative particles},
  Topology \textbf{14} (1975), 91--107.

\bibitem[MP]{MillerPalmer}
Jeremy Miller and Martin Palmer, \emph{ A twisted homology fibration criterion and the twisted group-completion theorem}, arXiv:1409.4389. 


\bibitem[MM]{MilnorMoore}
John W. Milnor and John C. Moore,  \emph {On the structure of Hopf algebras}, Ann. of Math. (2) \text bf {81} (1965), 211–-264.

\bibitem[Pal]{Palais}
Richard Palais, \emph {Local triviality of the restriction map for embeddings}, Comment. Math. Helv. {\textbf 34 }(1960) 305–-312.

\bibitem[P]{Palmer}
Martin Palmer, \emph{Homological stability for oriented configuration spaces}, 
 Trans. Amer. Math. Soc. \textbf {365} (2013), no. 7, 3675–-3711.

\bibitem[Pe]{Perlmutter}
Nathan Perlmutter, \emph{Homological stability for the moduli 
spaces of products of spheres}, to be published in Trans. Amer. Math. Soc.


\bibitem[RW1]{RW}
Oscar Randal-Williams, 
\emph{Homological stability for unordered configuration spaces},
 Q. J. Math. \textbf{64} (2013), no. 1, 303-–326.

\bibitem[RW2]{RW2}
Oscar Randal-Williams,  
\emph{\lq Group-completion', local coefficient systems and perfection}, Q. J. Math. \textbf{64} (2013), no. 3, 795--803.

\bibitem[Se]{Segal}
Graeme Segal, \emph{The topology of spaces of rational functions}, 
Acta Math. {\bf 143} (1979), no. 1-2, 39--72. 

\bibitem[Sm1]{Smale}
Stephen Smale,  
\emph{Diffeomorphisms of the 2-sphere}, Proc. Amer. Math. Soc.\textbf{ 10} (1959), 621--626.

\bibitem[Sm2]{Smale61}
Stephen Smale, \emph{Generalized Poincar\'e's conjecture in dimensions greater than four}, Ann. of Math. (2) \textbf{74} (1961) 391--406.

\bibitem[W] {Waldhausen}
Friedhelm Waldhausen,
\emph {Algebraic K-theory of spaces} in  Algebraic and 
geometric topology (New Brunswick, N.J., 1983), 318-–419,
Lecture Notes in Math., 1126, Springer, Berlin, 1985. 

\end{thebibliography}
	\bibliographystyle{amsalpha}

\vskip .3in
Mathematical Institute
\newline
Oxford University
\newline
Andrew Wiles Building
\newline
Oxford OX2 6GG
\newline
UK

{\it tillmann@maths.ox.ac.uk}

\end{document}